\documentclass[11pt]{amsart}
\usepackage{amsfonts,amssymb,amscd,amsmath,enumerate,verbatim,calc,thumbpdf,stmaryrd, graphicx,ulem,mathtools}
\usepackage{tikz,hyperref}
\usepackage[all]{xy}

\theoremstyle{plain}
\newtheorem{thm}{Theorem}[section]
\newtheorem{lem}[thm]{Lemma}
\newtheorem{cor}[thm]{Corollary} 
\newtheorem{prop}[thm]{Proposition}

\theoremstyle{definition}
\newtheorem{defin}[thm]{{Definition}}
\newtheorem{ex}[thm]{Example}
\newtheorem{fact}[thm]{Fact}
\newtheorem{notation}[thm]{Notation}

\newtheorem{Assumption}[thm]{Assumption}

\numberwithin{equation}{thm}

\DeclareMathOperator{\lcm}{lcm}
\DeclareMathOperator{\mdimen}{m-dim}
\DeclareMathOperator{\distance}{dist}

\newcommand{\moninf}{\mathbb{M}^i_{\infty \geq 0}}
\newcommand{\moninfi}{\mathbb{M}^i_{\infty \geq 0}}
\newcommand{\eref}[1]{Example~\ref{e#1}}
\newcommand{\tref}[1]{Theorem~\ref{t#1}}
\newcommand{\fref}[1]{Fact~\ref{f#1}}
\newcommand{\lref}[1]{Lemma~\ref{l#1}}
\newcommand{\pref}[1]{Proposition~\ref{p#1}}

\newcommand{\cref}[1]{Corollary~\ref{c#1}}

\newcommand{\sref}[1]{Section~\ref{s#1}}
\newcommand{\ul}[1]{\underline{#1}}

\newcommand{\ideal}[3]{#1_{\underline{#2},\underline{#3}}}

\newcommand{\mon}[3]{\underline{#1}^{\underline{#2}_{#3}}}
\newcommand{\mono}[2]{\underline{#1}^{\underline{#2}}}
\newcommand{\monset}[1]{\left\llbracket #1 \right\rrbracket}
\newcommand{\start}[2]{\kern{#1em} #2 | }
\newcommand{\st}{\; | \;}
\newcommand{\dist}[2]{\distance(#1,#2)}
\newcommand{\mdim}[1]{\mdimen(#1)}

\newcommand{\moncrs}{\mathbb{M}^1_{\geq 0}\times\cdots\times\mathbb{M}^d_{\geq 0}}
\newcommand{\mongeqi}{\mathbb{M}^i_{\geq 0}}

\def\hence{%
   \;\,
   \leavevmode
   \lower0.2ex\hbox{$\cdotp$}%
   \kern0.0em\raise0.7ex\hbox{$\cdotp$}%
   \kern0.0em\lower0.2ex\hbox{$\cdotp$}%
   \thinspace
   \;\,}

\begin{document}

\begin{abstract}
We develop a new technique for studying monomial ideals in the standard polynomial rings $A[X_1,\ldots,X_d]$ where $A$ is a commutative ring with identity. The main idea is to consider induced ideals in the semigroup ring $R=A[\moncrs]$ where $\mathbb{M}^1,\ldots,\mathbb{M}^d$ are non-zero additive subgroups of $\mathbb{R}$. We prove that the set of non-zero finitely generated monomial ideals in $R$ has the structure of a metric space, and we prove that a version of Krull dimension for this setting is lower semicontinuous with respect to  this metric space structure. We also show how to use discrete techniques to study certain monomial ideals in this context.
\end{abstract}

\title[Krull dimension of monomial ideals]{Krull dimension of monomial ideals in polynomial rings with real exponents}
\author[Zechariah Andersen]{Zechariah Andersen}
\address{Department of Mathematics,
NDSU Dept \# 2750,
PO Box 6050,
Fargo, ND 58108-6050
USA}
\email{zechariah.andersen@my.ndsu.edu}
\author{Sean Sather-Wagstaff}

\email{sean.sather-wagstaff@ndsu.edu}

\urladdr{http://www.ndsu.edu/pubweb/\~{}ssatherw/}

\thanks{
Sean Sather-Wagstaff was supported in part by a grant from the NSA}

\keywords{Edge ideals, m-irreducible decompositions, Krull dimension, monomial ideals, semicontinuous}
\subjclass[2010]{Primary: 13C05, 13F20; Secondary: 05C22, 05E40, 05C69.}

\dedicatory{To our teacher and friend, Jim Coykendall}

\maketitle

\section{Introduction}

\begin{Assumption}
Throughout this paper, $A$ is a non-zero commutative ring with identity, and
$\mathbb{M}^1, \ldots, \mathbb{M}^d$ are non-zero additive subgroups of $\mathbb{R}$.
 For $i = 1,\ldots,d$ set
 $\mathbb{M}^i_{\geq 0} = \{m \in \mathbb{M}^i \st m \geq 0\}$.
\end{Assumption}

We are interested in  properties of monomial ideals 
in the polynomial ring $S=A[X_1,\ldots,X_d]$, that is, ideals generated by monomials. 
These ideals have deep applications to combinatorics; for instance, 
building from work of Hochster~\cite{hochster:cmrcsc} and Reisner~\cite{reisner:cmqpr}, Stanley uses 
monomial ideals to prove his upper bound theorem for simplicial spheres~\cite{stanley:ubccmr}.
On the other hand, one can use the combinatorial aspects of these ideals to construct interesting 
examples and verify their properties. See, e.g., \cite{bruns:cmr, miller:cca, rogers:mid, stanley:cca, villarreal:ma} for some aspects of 
this.

For small values of $d$, one can study a given monomial ideal $I\subseteq S$ visually. 
For instance, when $d=2$, one considers the set of points $(a_1,a_2)\in\mathbb Z^2$ such that $X_1^{a_1}X_2^{a_2}\in I$.
This ``graph of $I$'' contains non-trivial information about $I$; for example, one can read certain
decompositions of $I$ from the graph. 

Given the fact that these graphs are (in general) subsets of $\mathbb R^d$, one should be able to study these ideals
using geometric techniques, as follows.
To prove a result about a given monomial ideal $I$, prove it for monomial ideals $J$ that are  ``close'' to $I$
in some suitable sense, and then prove that the closeness of $I$ to $J$ forces the conclusion to transfer from $J$ to $I$.
The problem with this idea is the following: 
the ideal $I$ is defined by discrete data (e.g., the lattice points $(a_1,a_2)$ described above).
Thus, a reasonable notion of ``closeness'' for  classical monomial ideals  based on the euclidean metric in $\mathbb R^n$ 
will likely be trivial. 
As a possible remedy for this defect, we switch perspectives from the discrete setting to a continuous one.

We consider the semigroup ring $R = A[\moncrs]$
which is the set of all 
(finite) 
$A$-linear combinations of \textit{monomials} 
$\mono{X}{m} = X_1^{m_1}X_2^{m_2}\cdots X_d^{m_d}$
where $\ul{m} = (m_1,m_2,\ldots,m_d) \in \moncrs$.
A \textit{monomial ideal} of $R$ is an ideal of $R$ that is generated by a set of monomials. 
(This includes the ideal $0=(\emptyset)R$.)
For instance, in the case $\mathbb M^i=\mathbb R$, 
the monomials of $R$ correspond to the points of the non-negative orthant $\mathbb R^d_{\geq 0}$,
and any monomial ideal  $I\subseteq S$ induces a monomial ideal  $IR\subseteq R$ since $S\subseteq R$.\footnote{It is worth noting
that the ring $A[\moncrs]$ has been studied previously, for instance, to construct interesting counterexamples to questions about non-noetherian rings;
see, e.g., \cite{anderson:mdcad}. We are grateful to Jim Coykendall for teaching us about these constructions.}
On the other hand, the case $\mathbb M^i=\mathbb Z$ recovers the monomial ideals of $S$.

The main results of this paper are in \sref{mdim} where we study a version of the Krull dimension
for this setting.
We prove that the set of non-zero finitely generated monomial ideals in $R$ has the structure of
a metric space in \tref{metric}.
(For example, in the case $\mathbb M^i=\mathbb R$, this applies to the ideals of the form $IR$ where $I$ a non-zero monomial ideal of $S$.)
Then in \cref{or130521a} we prove that our Krull dimension is lower semicontinuous with respect to  this metric space structure.
This suggests that one may be able to apply the geometric techniques described above in this setting, even to monomial ideals in $S$. 

In
\sref{ec130510a}, we run this idea in reverse, in some sense, by showing how to use discrete techniques
from $S$ to study monomial ideals in $R$ for the case $\mathbb M^i=\mathbb R$.
Specifically, we apply techniques from \cite{paulsen:eiwg} to a special class of monomial ideals of $R$ that behave like
edge ideals of weighted graphs. The main result of this section is
\tref{hm130513a} which provides non-trivial decompositions of these ideals determined by objects that we call ``interval vertex covers''. 

In a sense,
\sref{finite} consists of background material and  examples. 
On the other hand, many of the results in this section are technically new, being versions of results from~\cite{ingebretson:dmirsr}
for our more general context.

\section{Monomial Ideals and their Decompositions} \label{sirred}\label{sfinite}
This section contains foundational material for use in the rest of the paper. 
Most of the ideas are standard for the case $\mathbb{M}^i=\mathbb{Z}$,
and the case $\mathbb M^i=\mathbb R$ is developed in \cite{ingebretson:dmirsr}.

\begin{Assumption}
Throughout this section, set
$R = A[\moncrs]$. 
\end{Assumption}

\subsection*{Monomial Basics}

\begin{defin}\label{dlcm}
For $i = 1,\ldots,d$ set
$\mathbb{M}^i_{> 0} = \{m \in \mathbb{M}^i \st m > 0\}$ and $\mathbb{M}^i_{\infty \geq 0} = \mathbb{M}^i_{\geq 0} \cup \{ \infty \}$.
A \textit{pure power} in $R$ is a monomial  of the form $X_i^r$.
For any subset $G \subseteq R,$ we let
$\monset{G}$ denote the set of all monomials in $G$, so $\monset{G} = G \cap \monset{R}$.
For $i=1,\ldots,d,$ we define\footnote{Despite this notation, note that 0 is not a monomial according to our definition.} 
$X_i^\infty = 0$.
\end{defin}

The following is a straightforward consequence of our definitions.

\begin{fact}\label{f2.2}
\label{f2.3}
Fix a set $\{I_\lambda\}_{\lambda \in \Lambda}$ of monomial ideals of $R$.
 \begin{enumerate}[(a)]
  \item \label{item130521a}
  The monomial ideal $I_\lambda$ is generated by $\monset{I_\lambda}$, so we have $I_\lambda \subseteq I_\mu$ if and only if $\monset{I_\lambda} \subseteq \monset{I_\mu},$ and hence $I_\lambda=I_\mu$ if and only if $\monset{I_\lambda} = \monset{I_\mu}$.
  \item \label{item130521b}
  Given monomials $f = \mono{X}{r}$ and $g = \mono{X}{s}$ in $R,$ we have $g \in (f)R$ if and only if for all $i$ there exists $t_i \in \mongeqi$ such that $r_i + t_i = s_i$. When these conditions are satisfied, we have $g=fh$ where $h=\ul{X}^{\ul{t}}$.
  \item \label{item130521c}
  Given a monomial $f \in \monset{R}$ and a subset $S \subseteq \monset{R},$ we have $f \in (S)R$ if and only if $f \in (s)R$ for some $s \in S$. 
  \item \label{item130521d}
  The sum $\sum_{\lambda \in \Lambda} I_\lambda$
  and intersection $\bigcap_{\lambda \in \Lambda} I_\lambda$ are monomial ideals such that $\monset{\sum_{\lambda \in \Lambda}I_\lambda} = \bigcup_{\lambda \in \Lambda} \monset{I_\lambda}$ and $\monset{\bigcap_{\lambda \in \Lambda} I_\lambda} = \bigcap_{\lambda \in \Lambda} \monset{I_\lambda}$. 
 \end{enumerate}
\end{fact}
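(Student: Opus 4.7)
The plan is to exploit the fact that $R = A[\moncrs]$ is, by construction, a free $A$-module with basis given by the set of monomials $\monset{R}=\{\mono{X}{m}\st \ul{m}\in\moncrs\}$. In particular every element $r\in R$ has a unique expression as a finite $A$-linear combination of distinct monomials, and the product rule on basis elements is simply $\mono{X}{r}\cdot\mono{X}{t}=\mono{X}{r+t}$. The structural observation underlying all four parts is this: if $I$ is the ideal generated by a set $S$ of monomials and $r\in I$ has monomial-basis expansion $r=\sum_j a_j\mono{X}{m_j}$, then each $\mono{X}{m_j}$ individually lies in $I$. I would verify this by writing $r=\sum_k r_k s_k$ with $s_k\in S$, expanding each $r_k$ in the monomial basis, collecting like monomials via uniqueness of the basis expansion, and observing that every $\mono{X}{m_j}$ must coincide with one of the products $(\text{monomial})\cdot s_k\in (s_k)R\subseteq I$.

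For part~\eqref{item130521b}, I would use the multiplication rule directly. If $g=fh$ with $h=\sum_k a_k\mono{X}{t_k}$ in the monomial basis, then $fh=\sum_k a_k\mono{X}{r+t_k}$, and uniqueness of the basis expansion for the monomial $g=\mono{X}{s}$ forces a single nonzero term with $r_i+t_{k,i}=s_i$ for every $i$. The converse is immediate by taking $h=\mono{X}{t}$ with $\ul{t}=\ul{s}-\ul{r}$.

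Part~\eqref{item130521a} is then an application of the key observation: any $r\in I_\lambda$ is an $A$-linear combination of monomials each of which lies in $\monset{I_\lambda}$, so $I_\lambda$ is generated by $\monset{I_\lambda}$; the containment and equality statements follow by comparing generating sets. Part~\eqref{item130521c} combines the key observation with part~\eqref{item130521b}: if the monomial $f$ lies in $(S)R$, then $f$ appears in the basis expansion of some $R$-linear combination $\sum_k r_k s_k$; expanding the $r_k$ in the monomial basis and using uniqueness shows $f$ equals one of the products $(\text{monomial})\cdot s_k$, so by~\eqref{item130521b} we have $f\in(s_k)R$. For part~\eqref{item130521d}, the sum is generated by $\bigcup_\lambda\monset{I_\lambda}$ by construction, while for the intersection an element $r\in\bigcap_\lambda I_\lambda$ has, by the key observation applied inside each $I_\lambda$, every monomial of its support in every $\monset{I_\lambda}$; this shows simultaneously that $\bigcap_\lambda I_\lambda$ is generated by monomials and that $\monset{\bigcap_\lambda I_\lambda}=\bigcap_\lambda\monset{I_\lambda}$.

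The only real obstacle is the careful bookkeeping in the key observation, namely verifying that the unique $A$-basis expansion interacts correctly with ideal generation. Since $A$ is an arbitrary commutative ring (possibly with zerodivisors) and the $\mathbb{M}^i_{\geq 0}$ are arbitrary subsemigroups of $\mathbb{R}_{\geq 0}$, one must argue purely at the level of the free $A$-module basis $\monset{R}$ rather than invoke any cancellation or ordering properties. Once that observation is established, every other clause is a short formal consequence.
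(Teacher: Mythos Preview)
Your argument is correct and is exactly the standard justification one gives for these facts: use the free $A$-module structure of $R$ on the monomial basis, observe that the support of any element of a monomial ideal again lies in the ideal, and deduce each clause from that. The paper does not actually supply a proof here---it records the statement as a Fact and says only that it ``is a straightforward consequence of our definitions''---so there is no alternative argument to compare against; your write-up simply fills in what the authors left to the reader.
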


\begin{ex}\label{ex130508a}
As with monomial ideals in the polynomial ring $A[X_1,X_2]$, we can visualize a
monomial ideal $I$ in $R=A[\mathbb{R}_{\geq 0}\times\mathbb{R}_{\geq 0}]$
via $\monset{I}$. For instance, here is $\monset I$ where $I=(X_1X_2^a\mid a>1)R$.

\

\begin{center}
\begin{tikzpicture}
	\draw[->] (-0.2,0) -- (3.4,0);
	\draw[->] (0,-0.2) -- (0,3.4);
	\draw (1.5,-0.2) node[below,scale=.75]{$1$} -- (1.5,0.2);
	\draw (3,-0.2) node[below,scale=.75]{$2$} -- (3,0.2);
	\draw (-0.2,1.5) node[left,scale=.75]{$1$} -- (0.2,1.5);
	\draw (-0.2,3) node[left,scale=.75]{$2$} -- (0.2,3);
	\draw[fill,color=black!50] (1.5,1.5) -- (3.2,1.5) -- (3.2,3.2) -- (1.5,3.2) -- cycle;
	\draw (1.5,1.5) -- (1.5,3.2);
	\draw[color=white] (1.5,1.5) -- (3.2,1.5);
	\draw[dashed] (1.5,1.5) -- (3.2,1.5);
	\draw (1.5,1.5)[fill,color=white] circle (2pt);
	\draw (1.5,1.5) circle (2pt);
\end{tikzpicture}
\end{center}
\end{ex}

\begin{defin}
Let $I$ be a monomial ideal of $R$ and suppose that $I = (\ul{X}^{\ul{\alpha}_1},\ldots,\ul{X}^{\ul{\alpha}_n})R$. We say that the list $\ul{X}^{\ul{\alpha}_1},\ldots,\ul{X}^{\ul{\alpha}_n}$ is an \textit{irredundant generating sequence} for $I$ if for all $i\neq j$ we have $\ul{X}^{\ul{\alpha}_i} \notin (\ul{X}^{\ul{\alpha}_j})R$.
\end{defin}

\begin{fact}
As a consequence of \fref{2.2},
one checks readily that every finitely generated monomial ideal in $R$ has a unique irredundant monomial generating sequence.
(Note, however, that $R$ may have monomial ideals that are not finitely generated.)
\end{fact}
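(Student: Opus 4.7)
The plan is to prove existence and uniqueness separately, both as direct consequences of \fref{2.2}.

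For existence, start with an arbitrary finite monomial generating sequence $\ul{X}^{\ul{\alpha}_1},\ldots,\ul{X}^{\ul{\alpha}_n}$ for $I$. Scan the list for a pair $i\neq j$ with $\ul{X}^{\ul{\alpha}_i}\in(\ul{X}^{\ul{\alpha}_j})R$; when such a pair exists, delete $\ul{X}^{\ul{\alpha}_i}$ from the list. By parts (a) and (c) of \fref{2.2}, the monomial set of the ideal generated by the shortened list is unchanged, so the ideal itself is unchanged. Each deletion strictly shortens the list, so after at most $n-1$ steps the procedure terminates at a sequence with no such pair, which is by definition an irredundant monomial generating sequence for $I$.

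For uniqueness up to reordering, suppose $\ul{X}^{\ul{\alpha}_1},\ldots,\ul{X}^{\ul{\alpha}_n}$ and $\ul{X}^{\ul{\beta}_1},\ldots,\ul{X}^{\ul{\beta}_m}$ are two irredundant monomial generating sequences for $I$. Fix $i$; since $\ul{X}^{\ul{\alpha}_i}\in I$, \fref{2.2}(c) applied to the $\ul{\beta}$-sequence furnishes an index $j(i)$ with $\ul{X}^{\ul{\alpha}_i}\in(\ul{X}^{\ul{\beta}_{j(i)}})R$, and applied to the $\ul{\alpha}$-sequence furnishes an index $k$ with $\ul{X}^{\ul{\beta}_{j(i)}}\in(\ul{X}^{\ul{\alpha}_k})R$. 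Chaining these gives $\ul{X}^{\ul{\alpha}_i}\in(\ul{X}^{\ul{\alpha}_k})R$, and irredundancy of the $\ul{\alpha}$-sequence forces $k=i$; therefore $\ul{X}^{\ul{\alpha}_i}$ and $\ul{X}^{\ul{\beta}_{j(i)}}$ divide each other in $R$.

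The one point that merits a moment's attention, and the closest thing to an obstacle, is that mutual divisibility of monomials forces equality: by \fref{2.2}(b), the divisibility $\ul{X}^{\ul{\alpha}_i}\in(\ul{X}^{\ul{\beta}_{j(i)}})R$ yields coordinates $t_\ell\in\mathbb{M}^\ell_{\geq 0}$ with $\beta_{j(i),\ell}+t_\ell=\alpha_{i,\ell}$, hence $\alpha_{i,\ell}\geq\beta_{j(i),\ell}$ for each $\ell$; the reverse divisibility yields the opposite inequalities, and together these force $\ul{\alpha}_i=\ul{\beta}_{j(i)}$. This shows $i\mapsto j(i)$ is well-defined, and irredundancy of the $\ul{\beta}$-sequence makes it injective (if $j(i_1)=j(i_2)$ then $\ul{X}^{\ul{\alpha}_{i_1}}=\ul{X}^{\ul{\beta}_{j(i_1)}}=\ul{X}^{\ul{\alpha}_{i_2}}$, so $i_1=i_2$ by irredundancy). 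The symmetric construction produces an injection $\{1,\ldots,m\}\to\{1,\ldots,n\}$, so $n=m$ and the two sequences agree up to permutation.
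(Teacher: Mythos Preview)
Your proof is correct and is precisely the routine verification the paper leaves to the reader; the paper gives no explicit argument beyond the phrase ``one checks readily,'' and your use of parts (a)--(c) of \fref{2.2} to run the deletion procedure for existence and the double-divisibility argument for uniqueness is the standard way to carry this out.
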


 The next result is proved as in \cite[Lemma~2.7]{ingebretson:dmirsr}, using \fref{2.3}.

\begin{lem}\label{l2.7} 
 For $t = 1,\ldots,l,$ let $\{K_{t,i_t}\}_{i_t=1}^{m_t}$ be a collection of monomial ideals of $R$. Then the following equalities hold:
 \[ \bigcap_{t=1}^l \sum_{i_t = 1}^{m_t} K_{t,i_t} = \sum_{i_1=1}^{m_1} \sum_{i_2 = 1}^{m_2} \cdots \sum_{i_l = 1}^{m_l} \bigcap_{t=1}^l K_{t,i_t}
        \]
\[ \sum_{t=1}^l \bigcap_{i_t = 1}^{m_t} K_{t,i_t} = \bigcap_{i_1=1}^{m_1}\bigcap_{i_2=1}^{m_2}\cdots \bigcap_{i_l = 1}^{m_l} \sum_{t=1}^l K_{t,i_t} \]
\end{lem}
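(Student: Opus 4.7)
The plan is to reduce both equalities to standard set-theoretic distributive laws, using the dictionary between monomial ideals and their monomial sets supplied by \fref{2.3}. Specifically, \fref{2.3}(a) tells us that two monomial ideals coincide if and only if they contain the same monomials, and \fref{2.3}(d) tells us that the monomial set of a sum (resp.\ intersection) of monomial ideals is the union (resp.\ intersection) of the individual monomial sets. So in each equation it suffices to check equality at the level of $\monset{\cdot}$.

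For the first equation, iterating \fref{2.3}(d) on the left-hand side yields
\[ \monset{\bigcap_{t=1}^l \sum_{i_t=1}^{m_t} K_{t,i_t}} = \bigcap_{t=1}^l \bigcup_{i_t=1}^{m_t} \monset{K_{t,i_t}}, \]
and on the right-hand side yields
\[ \monset{\sum_{i_1=1}^{m_1}\cdots\sum_{i_l=1}^{m_l} \bigcap_{t=1}^l K_{t,i_t}} = \bigcup_{i_1=1}^{m_1}\cdots\bigcup_{i_l=1}^{m_l} \bigcap_{t=1}^l \monset{K_{t,i_t}}. \]
Equality of these two sets is the familiar distributive identity $\bigcap_t \bigcup_{i_t} A_{t,i_t} = \bigcup_{(i_1,\ldots,i_l)} \bigcap_t A_{t,i_t}$, which one can prove by induction on $l$ starting from the two-set case $A \cap (B \cup C) = (A \cap B) \cup (A \cap C)$, or by direct element-chasing (for an element in the left-hand side, choose for each $t$ an index $i_t$ witnessing membership in the corresponding union). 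The second equation is handled in exactly the same way, reducing via \fref{2.3}(a) and (d) to the dual identity $\bigcup_t \bigcap_{i_t} A_{t,i_t} = \bigcap_{(i_1,\ldots,i_l)} \bigcup_t A_{t,i_t}$.

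No serious obstacle arises; the proof is almost entirely bookkeeping. The single point worth emphasizing is that the distributive laws for sum and intersection of \emph{arbitrary} ideals fail in general, and what makes them hold here is precisely that \fref{2.3}(d) lets us replace ideal-theoretic $\sum$ and $\bigcap$ by their set-theoretic counterparts $\bigcup$ and $\bigcap$, where distributivity is unconditional.
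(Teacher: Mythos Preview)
Your proof is correct and follows exactly the approach the paper indicates: the paper simply cites \cite[Lemma~2.7]{ingebretson:dmirsr} and \fref{2.3}, and your argument spells out precisely this reduction, using \fref{2.3}(a),(d) to pass from monomial ideals to their monomial sets and then invoking the set-theoretic distributive laws.
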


\subsection*{Generators of Intersections of Monomial Ideals}

\

\noindent
\fref{2.2}\eqref{item130521d} shows that an intersection of monomial ideals is again a monomial ideal. 
In this subsection, we explicitly describe a monomial generating sequence for any finite intersection of
monomial ideals; see \pref{2.5}. This is key for many of our results, and it strongly relies on the assumption
that each $\mathbb{M}^i$ is closed under subtraction.

\begin{defin}\label{dlcmA}
Let $\mon{X}{r}{1}, \ldots, \mon{X}{r}{k} \in \monset{R}$ with $\ul{r}_i = (r_{i,1},\ldots,r_{i,d})$. The \textit{least common multiple} of the $\mon Xri$ is
$\lcm_{\begin{subarray}{1}1 \leq i \leq k\end{subarray}}(\mon{X}{r}{i}) = \mono{X}{p}$
 where $\ul{p}$ is defined as
 \[p_j = \inf\{ m \in \mongeqi \st r_{i,j} + t_i = m\ \text{ for some } t_i \in \mongeqi \text{ and } i=1,\ldots,d\}.\]
\end{defin}

\begin{lem}\label{llcmsub}
Given $\mon{X}{r}{1},\ldots,\mon{X}{r}{k} \in \monset R$, we have that
$\lcm_{1 \leq i \leq k}(\mon{X}{r}{i}) = \mono{X}{p}$
 where $p_i = \max_{1 \leq j \leq k} \{r_{i,j}\}$.
 \begin{proof}
We prove the case $k=2$; the general case is handled similarly.
  Fix $\mono{X}{r}$ and $\mono{X}{q} \in \monset R$, and for $i=1,\ldots,d,$ set 
  \[L_i = \{m \in \mongeqi \st r_i + \alpha = m =q_i + \beta  \text{ for some } \alpha,\beta \in \mongeqi\}.\]
  We can rewrite each $L_i$ as
  \begin{align*}
   L_i 
       &= \{m \in \mongeqi \st m-r_i \geq 0 \text{ and } m-q_i \geq 0 \} \\
       &= \{m \in \mongeqi \st m \geq r_i \text{ and } m \geq q_i \} \\
       &= \{m \in \mongeqi \st m \geq \max\{r_i,q_i\} \}.
  \end{align*}
The first  equality follows from the fact that $\mathbb{M}^i$ is closed under subtraction,  and the other equalities are straightforward. Thus, by definition, 
we have $\lcm(\mono{X}{r},\mono{X}{q}) = \mono{X}{s},$ where $s_i = \inf(L_i) = \max\{r_i,q_i\}$.
 \end{proof}
\end{lem}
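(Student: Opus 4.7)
The plan is to extend the $k=2$ argument already given in the statement to arbitrary $k$; the structure is essentially identical, and the key point is that the existence condition appearing in \dref{lcmA} collapses to a single numerical inequality because each $\mathbb{M}^j$ is closed under subtraction.

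For the proof itself, I would fix a coordinate $j \in \{1,\ldots,d\}$ and write the set appearing in \dref{lcmA} as
\[ L_j = \{m \in \mathbb{M}^j_{\geq 0} \st \text{for each } i = 1,\ldots,k \text{ there is } t_i \in \mathbb{M}^j_{\geq 0} \text{ with } r_{i,j} + t_i = m\}, \]
so that $p_j = \inf(L_j)$. Given $m, r_{i,j} \in \mathbb{M}^j_{\geq 0}$, the difference $m - r_{i,j}$ lies in $\mathbb{M}^j$ since $\mathbb{M}^j$ is a subgroup of $\mathbb{R}$; hence the existence of such a $t_i$ is equivalent to the real-number inequality $m \geq r_{i,j}$, the witness being $t_i = m - r_{i,j}$. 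Writing $\mu_j := \max_{1 \leq i \leq k} r_{i,j}$, one then has
\[ L_j = \{m \in \mathbb{M}^j_{\geq 0} \st m \geq r_{i,j} \text{ for all } i\} = \{m \in \mathbb{M}^j_{\geq 0} \st m \geq \mu_j\}. \]
Since $\mu_j$ equals some specific $r_{i_0,j} \in \mathbb{M}^j_{\geq 0}$, it lies in $L_j$ itself and is clearly a lower bound for $L_j$, so $p_j = \inf(L_j) = \mu_j$, as desired.

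The only step that warrants attention — and it is not really an obstacle — is the translation from the existence condition to the inequality $m \geq r_{i,j}$. This is precisely where the subgroup hypothesis on $\mathbb{M}^j$ is essential: without closure under subtraction, an $m$ dominating all $r_{i,j}$ in $\mathbb{R}$ need not produce witnesses $t_i \in \mathbb{M}^j_{\geq 0}$, and the set $L_j$ could be strictly smaller than the set of real-number upper bounds of the $r_{i,j}$. Everything else is bookkeeping.
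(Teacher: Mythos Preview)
Your proof is correct and follows the same approach as the paper: rewrite the defining set coordinatewise, use closure of $\mathbb{M}^j$ under subtraction to replace the existence of witnesses $t_i$ by the inequalities $m \geq r_{i,j}$, and conclude that the infimum is the maximum. The only difference is cosmetic: the paper carries out the $k=2$ case explicitly and waves at the general case, whereas you write the general $k$ argument directly and add the helpful remark that $\mu_j$ is actually attained (being one of the $r_{i_0,j}$), so the infimum is a minimum.
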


The next result is proved like~\cite[Theorem 2.5]{ingebretson:dmirsr}, using \lref{lcmsub}.

\begin{prop}\label{p2.5}
Given subsets $S_1,\ldots, S_k \subseteq \monset{R},$ we have
 \[ \bigcap_{i=1}^k(S_i)R = \left(\left\{\lcm_{1\leq i \leq k} (f_i) \st f_i \in S_i \text{ for } i = 1,\ldots, k \right\}\right)R.\]
\end{prop}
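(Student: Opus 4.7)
The plan is to prove the two ideals coincide by showing that they contain the same monomials, which suffices by \fref{2.2}\eqref{item130521a} and \eqref{item130521d} (the intersection on the left is already a monomial ideal). I would handle the case $k = 2$ in detail and then either iterate or invoke a routine induction on $k$, since the LCM behaves associatively by \lref{l2.7} and \lref{llcmsub}.

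For the containment $(\supseteq)$, fix any tuple $(f_1,\ldots,f_k)$ with $f_i \in S_i$. By \lref{llcmsub}, writing $f_i = \ul{X}^{\ul{r}_i}$, the exponent vector $\ul{p}$ of $\lcm_{1 \leq i \leq k}(f_i)$ satisfies $p_j \geq r_{i,j}$ coordinatewise for every $i$. Hence $\ul{p} - \ul{r}_i \in \moncrs$ for each $i$, so by \fref{2.2}\eqref{item130521b} we have $\lcm_{1 \leq i \leq k}(f_i) \in (f_i)R \subseteq (S_i)R$ for every $i$, placing each generator of the right-hand ideal in the intersection.

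For the containment $(\subseteq)$, since both ideals are monomial, it suffices by \fref{2.2}\eqref{item130521d} to show that every monomial $g = \ul{X}^{\ul{s}}$ in $\bigcap_{i=1}^k (S_i)R$ lies in the right-hand ideal. For each $i$, \fref{2.2}\eqref{item130521c} supplies some $f_i = \ul{X}^{\ul{r}_i} \in S_i$ with $g \in (f_i)R$, and then \fref{2.2}\eqref{item130521b} gives $s_j \geq r_{i,j}$ for every $i,j$. Thus $s_j \geq \max_{1 \leq i \leq k} r_{i,j}$, which by \lref{llcmsub} is precisely the $j$-th exponent of $\lcm_{1 \leq i \leq k}(f_i)$. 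Applying \fref{2.2}\eqref{item130521b} once more shows $g \in (\lcm_{1 \leq i \leq k}(f_i))R$, as needed.

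I do not anticipate any genuine obstacle: the essential content has been packaged into \fref{2.2} and \lref{llcmsub}, and the argument is a direct coordinatewise comparison of exponent vectors. The only mild subtlety is that \dref{dlcm} defines $\ul{p}$ via an infimum over a set whose membership depends on $\mathbb{M}^i$ being closed under subtraction; but \lref{llcmsub} already converts that infimum into a pointwise $\max$, so once the $k = 2$ case is in hand, the general case follows by an easy induction (or by observing that both sides are unchanged under grouping, via \lref{l2.7}).
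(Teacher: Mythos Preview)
Your argument is correct and is precisely the approach the paper intends: it defers to \cite[Theorem~2.5]{ingebretson:dmirsr} ``using \lref{lcmsub},'' and your monomial-by-monomial comparison via \fref{2.2} and the coordinatewise-max description of the lcm is exactly that proof. One small quibble: your aside invoking \lref{2.7} for associativity of the lcm is misplaced (that lemma concerns distributivity of sums and intersections), but you never actually need it since your argument already treats general $k$ directly.
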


\subsection*{M-Irreducible  Monomial Ideals}

\

\noindent Here we characterize the monomial ideals that cannot be decomposed as a non-trivial, finite intersection of monomial ideals;
see \pref{classif1}.

\begin{notation}
 Let $\varepsilon \in \{0,1\}$. Given $r \in \mathbb{M}^i$ and $\alpha \in \mathbb{R},$ we define \[ r \geq_\varepsilon \alpha \textnormal{ provided that } 
 \begin{cases}
  r \geq \alpha & \text{if $\varepsilon = 0$} \\
  r > \alpha & \text{if $\varepsilon = 1$}.
 \end{cases}
\]
Given $s \in \moninf,$ we define 
\[s \geq_{\varepsilon} \infty \text{ provided that } s = \infty.\]
Given $\ul{\alpha} = (\alpha_1,\ldots,\alpha_d) \in \mathbb{R}_{\infty \geq 0}^d$ and $\ul{\varepsilon} = (\varepsilon_1,\ldots,\varepsilon_d) \in \{0,1\}^d,$ we set
\[ J_{\ul{\alpha},\ul{\varepsilon}} = (\{ X_i^{r_i} \st r_i \in \moninfi \text{ and } r_i \geq_{\varepsilon_i} \alpha_i \text{ for } i = 1,\ldots,d\})R. \]
Note that $J_{\ul{\alpha},\ul{\varepsilon}}$ is generated by pure powers in $R$.
\end{notation}

\begin{ex}\label{ex130508g}
In $R=A[\mathbb{R}_{\geq 0}\times\mathbb{R}_{\geq 0}]$
we illustrate the ideals $J_{(1,1),(0,1)}$ and $J_{(1,1),(0,0)}=(X_1,X_2)R$.

\

\begin{center}
\begin{tikzpicture}
	\draw (1.5,-0.2) node[below,scale=.75]{$1$} -- (1.5,0.2);
	\draw (3,-0.2) node[below,scale=.75]{$2$} -- (3,0.2);
	\draw (-0.2,1.5) node[left,scale=.75]{$1$} -- (0.2,1.5);
	\draw (-0.2,3) node[left,scale=.75]{$2$} -- (0.2,3);
	\draw[fill,color=black!50] (0,1.5) -- (1.5,1.5) -- (1.5,0) -- (3.2,0) -- (3.2,3.2) -- (0,3.2) -- cycle;
	\draw[->] (-0.2,0) -- (3.4,0);
	\draw[->] (0,-0.2) -- (0,3.4);
	\draw[color=white] (0,1.5) -- (1.5,1.5);
	\draw[dashed] (0,1.5) -- (1.5,1.5);
	\draw (1.5,0) -- (1.5,1.5);
\end{tikzpicture}
\qquad
\qquad
\qquad
\begin{tikzpicture}
	\draw (1.5,-0.2) node[below,scale=.75]{$1$} -- (1.5,0.2);
	\draw (3,-0.2) node[below,scale=.75]{$2$} -- (3,0.2);
	\draw (-0.2,1.5) node[left,scale=.75]{$1$} -- (0.2,1.5);
	\draw (-0.2,3) node[left,scale=.75]{$2$} -- (0.2,3);
	\draw[fill,color=black!50] (0,1.5) -- (1.5,1.5) -- (1.5,0) -- (3.2,0) -- (3.2,3.2) -- (0,3.2) -- cycle;
	\draw[->] (-0.2,0) -- (3.4,0);
	\draw[->] (0,-0.2) -- (0,3.4);
	\draw (0,1.5) -- (1.5,1.5);
	\draw (1.5,0) -- (1.5,1.5);
\end{tikzpicture}
\end{center}
\end{ex}

\begin{defin}\label{d3.7}
 A monomial ideal $I \subseteq R$ is \textit{m-irreducible} (short for \textit{mono-mial-irreducible}) provided that for all monomial ideals $J$ and $K$ of $R$ such that $I = J \cap K,$ either $I = J$ or $I = K$.
\end{defin}

The following characterization of m-irreducible monomial ideals is proved as in \cite[Theorem 3.9]{ingebretson:dmirsr} using \fref{2.2} and \pref{2.5}.

\begin{prop}\label{pclassif1} 
For a monomial ideal $I \subseteq R$, the following conditions are equivalent.
 \begin{enumerate}[\rm(i)]
  \item $I$ is generated by pure powers of a subset of the variables $X_1,\ldots,X_d$.
  \item there exist $\ul{\alpha} \in \mathbb{R}_{\infty \geq 0}^d$ and $\ul{\varepsilon} \in \{0,1\}^d$ such that $I = J_{\ul{\alpha},\ul{\varepsilon}}$.
  \item $I$ is m-irreducible.
 \end{enumerate}
 \end{prop}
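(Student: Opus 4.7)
The plan is to prove (i) $\Rightarrow$ (ii) $\Rightarrow$ (iii) $\Rightarrow$ (i). The implication (ii) $\Rightarrow$ (i) is tautological since $J_{\ul{\alpha},\ul{\varepsilon}}$ is by construction generated by pure powers. For (i) $\Rightarrow$ (ii), I would extract the parameters directly from the ideal: set $S_i = \{r \in \mathbb{M}^i_{\geq 0} \st X_i^r \in I\}$, observe that $S_i$ is upward closed in $\mathbb{M}^i_{\geq 0}$ (using that $\mathbb{M}^i$ is closed under subtraction, exactly as in the proof of \lref{lcmsub}), and put $\alpha_i = \inf S_i \in \mathbb{R}_{\infty \geq 0}$, taking $\varepsilon_i = 0$ when $\alpha_i \in S_i$ and $\varepsilon_i = 1$ otherwise. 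A short check confirms $S_i = \{r \in \mathbb{M}^i_{\geq 0} \st r \geq_{\varepsilon_i} \alpha_i\}$, so $I$ and $J_{\ul{\alpha},\ul{\varepsilon}}$ contain the same pure powers; hence $I = J_{\ul{\alpha},\ul{\varepsilon}}$ by \fref{2.2}\eqref{item130521a}.

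For (ii) $\Rightarrow$ (iii), I would first use \fref{2.2}\eqref{item130521b} to characterize the monomials in $J_{\ul{\alpha},\ul{\varepsilon}}$: a monomial $\mono{X}{m}$ lies in $J_{\ul{\alpha},\ul{\varepsilon}}$ if and only if $m_i \geq_{\varepsilon_i} \alpha_i$ for some $i$. Now suppose $I = J_{\ul{\alpha},\ul{\varepsilon}} = J \cap K$ with $I \subsetneq J$ and $I \subsetneq K$; choose monomials $f = \mono{X}{s} \in \monset{J} \setminus \monset{I}$ and $g = \mono{X}{t} \in \monset{K} \setminus \monset{I}$, so that $s_i \not\geq_{\varepsilon_i} \alpha_i$ and $t_i \not\geq_{\varepsilon_i} \alpha_i$ for every $i$. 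By \pref{2.5} and \lref{lcmsub}, the monomial $\mono{X}{u}$ with $u_i = \max\{s_i,t_i\}$ lies in $(f)R \cap (g)R \subseteq J \cap K = I$, forcing $u_i \geq_{\varepsilon_i} \alpha_i$ at some coordinate. But then one of $s_i, t_i$ equals $u_i$ at that index and already satisfies the relation, a contradiction.

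I would prove (iii) $\Rightarrow$ (i) by contrapositive, and this is where I expect the main obstacle to arise, since $I$ need not be finitely generated and hence may lack an irredundant generating sequence. Assume $I$ is not generated by pure powers and let $P \subseteq I$ denote the monomial ideal generated by all pure powers in $\monset{I}$; then $\monset{I} \setminus \monset{P} \neq \emptyset$. The key move is to pick $\mono{X}{r} \in \monset{I} \setminus \monset{P}$ whose number of positive coordinates is \emph{minimum} (a well-defined nonnegative integer at most $d$). Such $\mono{X}{r}$ must have at least two positive coordinates, say $r_i, r_j > 0$, since a single positive coordinate would make $\mono{X}{r}$ itself a pure power in $\monset{I}$, hence in $P$. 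Set $J = I + (X_i^{r_i})R$ and $K = I + (\mono{X}{r'})R$, where $r'_i = 0$ and $r'_k = r_k$ for $k \neq i$. Using \fref{2.2}, a monomial in $(\monset{J} \cap \monset{K}) \setminus \monset{I}$ would satisfy $m_i \geq r_i$ and $m_k \geq r_k$ for $k \neq i$, hence be divisible by $\mono{X}{r}$ and so lie in $I$; thus $J \cap K = I$. If $X_i^{r_i} \in I$, then it is divisible by some pure power $X_i^{s_i} \in \monset{I}$ with $s_i \leq r_i$, and this pure power then divides $\mono{X}{r}$, placing $\mono{X}{r}$ in $P$, a contradiction; so $J \supsetneq I$. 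Finally, if $\mono{X}{r'} \in I$, then $\mono{X}{r'} \notin P$ (else $\mono{X}{r} = X_i^{r_i}\mono{X}{r'} \in P$), but $\mono{X}{r'}$ has strictly fewer positive coordinates than $\mono{X}{r}$, contradicting minimality; so $K \supsetneq I$, exhibiting the desired nontrivial decomposition.
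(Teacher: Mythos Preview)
Your proof is correct and complete; each implication is carefully argued and the minimality trick in (iii) $\Rightarrow$ (i) cleanly handles the non-finitely-generated case. The paper itself does not give a detailed argument but simply cites \cite[Theorem~3.9]{ingebretson:dmirsr} together with \fref{2.2} and \pref{2.5}, which are exactly the tools you invoke, so your approach is in line with the intended one.
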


\begin{ex}\label{ex130508i}
\pref{classif1} implies that the ideals in Example~\ref{ex130508g} are m-irreducible. 
It is worth noting that, even though the following graph has roughly the same shape as those in Example~\ref{ex130508g}

\

\begin{center}
\begin{tikzpicture}
	\draw (1.5,-0.2) node[below,scale=.75]{$1$} -- (1.5,0.2);
	\draw (3,-0.2) node[below,scale=.75]{$2$} -- (3,0.2);
	\draw (-0.2,1.5) node[left,scale=.75]{$1$} -- (0.2,1.5);
	\draw (-0.2,3) node[left,scale=.75]{$2$} -- (0.2,3);
	\draw[fill,color=black!50] (0,1.5) -- (1.5,1.5) -- (1.5,0) -- (3.2,0) -- (3.2,3.2) -- (0,3.2) -- cycle;
	\draw[->] (-0.2,0) -- (3.4,0);
	\draw[->] (0,-0.2) -- (0,3.4);
	\draw[color=white] (1.5,0) -- (1.5,0.75);
	\draw[dashed] (1.5,0) -- (1.5,0.75);
	\draw (1.5,0.75) -- (1.5,1.5);
	\draw (0,1.5) -- (1.5,1.5);
	\draw (1.5,0.75)[fill,color=black] circle (2pt);
\end{tikzpicture}
\end{center}
the ideal 
$I=(X_1^a,X_2,X_1X_2^{1/2}\mid a>1)R$
it represents
in $R=A[\mathbb{R}_{\geq 0}\times\mathbb{R}_{\geq 0}]$
is not m-irreducible.
This follows from \pref{classif1} because $I$ cannot be generated by pure powers of the variables.
One also deduces this by definition using the decomposition
$$I=(X_1,X_2)R\cap(X_1^a,X_2^{1/2}\mid a>1)R=J_{(1,1),(0,0)}\cap J_{(1,1/2),(1,0)}.$$
This decomposition is non-trivial, since \fref{2.3}
implies that we have $X_1\in J_{(1,1),(0,0)}\smallsetminus J_{(1,1/2),(1,0)}$
and  $X_2^{1/2}\in J_{(1,1/2),(1,0)}\smallsetminus J_{(1,1),(0,0)}$.
\end{ex}

\subsection*{M-Prime Monomial Ideals}

\

\noindent
Here we characterize the ideals of $R$ that are prime with respect to monomials, for use in \sref{mdim}.

\begin{defin}\label{dmprime}
A monomial ideal $P \subsetneq R$ is \textit{m-prime} (short for \textit{monomial-prime}) provided that 
for all $f,g \in \monset{R},$ if $f\cdot g \in P,$ then $f \in P$ or $g \in P$.
Given a subset $T \subseteq \{1,\ldots,d\}$, set \[Q_T = (X_i^n \st i \in T \text{ and } n \in \mathbb{M}^i_{> 0})R. \]
\end{defin}

\begin{prop}\label{ppure}
For a monomial ideal $I \subseteq R$, the following conditions are equivalent.
 \begin{enumerate}[\rm(i)]
  \item $I$ is m-prime.
  \item There exists $T \subseteq \{1,\ldots,d\}$ such that $I = Q_T$.
  \item $I = J_{\ul\alpha,\ul 1}$ where $\alpha_i \in \{0,\infty\}$ for all $i$ and $\ul{1}=(1,\ldots,1)$.
 \end{enumerate}
\end{prop}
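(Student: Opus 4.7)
The plan is to establish (ii)$\Leftrightarrow$(iii) by direct computation, (ii)$\Rightarrow$(i) using \fref{2.3}, and (i)$\Rightarrow$(ii) by a semigroup-theoretic argument on pure powers. For (ii)$\Leftrightarrow$(iii), I match $T \leftrightarrow \ul\alpha$ by declaring $\alpha_i = 0$ when $i \in T$ and $\alpha_i = \infty$ when $i \notin T$, with $\ul\varepsilon = \ul 1$. The generator condition $r_i \geq_1 \alpha_i$ then becomes $r_i > 0$ when $\alpha_i = 0$ (producing exactly the non-trivial pure powers of $X_i$) and forces $r_i = \infty$ when $\alpha_i = \infty$ (so $X_i^{r_i} = 0$ contributes nothing). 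Hence $J_{\ul\alpha,\ul 1} = Q_T$, and this correspondence is reversible.

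For (ii)$\Rightarrow$(i), suppose $I = Q_T$ and let $f = \ul X^{\ul r}$, $g = \ul X^{\ul s} \in \monset{R}$ satisfy $fg \in Q_T$. By \fref{2.3}\eqref{item130521c} some generator $X_j^n$ of $Q_T$ (so $j \in T$ and $n \in \mathbb{M}^j_{>0}$) divides $\ul X^{\ul r + \ul s}$, and then \fref{2.3}\eqref{item130521b} yields $r_j + s_j \geq n > 0$. Thus $r_j > 0$ or $s_j > 0$; in the former case $X_j^{r_j}$ is itself a generator of $Q_T$ dividing $f$, so $f \in Q_T$, and the latter case is symmetric.

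For (i)$\Rightarrow$(ii), set $T = \{i \st X_i^n \in I \text{ for some } n \in \mathbb{M}^i_{>0}\}$ and show $I = Q_T$. For $I \subseteq Q_T$, any monomial $\ul X^{\ul r} \in I$ has $\ul r \neq \ul 0$ since $I \subsetneq R$; by induction on the size of the support $\{i \st r_i > 0\}$, splitting $\ul X^{\ul r} = X_i^{r_i} \cdot (\ul X^{\ul r}/X_i^{r_i})$ and using m-primality, some $X_i^{r_i}$ with $r_i > 0$ lies in $I$, so $i \in T$ and $\ul X^{\ul r} \in Q_T$. For $Q_T \subseteq I$, fix $i \in T$ and set $S_i = \{n \in \mathbb{M}^i_{>0} \st X_i^n \in I\}$ and $U_i = \mathbb{M}^i_{>0} \setminus S_i$. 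Divisibility (using that $\mathbb{M}^i$ is a group, so $m \geq n$ gives $m - n \in \mathbb{M}^i_{\geq 0}$) makes $S_i$ upward closed in $\mathbb{M}^i_{>0}$, hence $U_i$ is downward closed. m-primality makes $U_i$ closed under addition: if $X_i^a, X_i^b \notin I$ then $X_i^{a+b} = X_i^a X_i^b \notin I$. If $U_i \neq \emptyset$, pick $u_0 \in U_i$; then $n u_0 \in U_i$ for every $n \geq 1$, and downward closure gives $\mathbb{M}^i_{>0} \cap (0, n u_0] \subseteq U_i$. Letting $n \to \infty$ forces $U_i = \mathbb{M}^i_{>0}$, contradicting $S_i \neq \emptyset$. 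Hence $S_i = \mathbb{M}^i_{>0}$, and $Q_T \subseteq I$.

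The main obstacle is this final containment: knowing that \emph{some} non-trivial pure power $X_i^{n_i}$ lies in $I$ must be boot-strapped to the statement that \emph{every} such pure power does. The key point is that the divisibility-derived upward closure of $S_i$ and the m-primality-derived additive closure of $U_i$, combined with the unboundedness of the arithmetic progression $\mathbb{N}\cdot u_0$ inside $\mathbb{R}$, force $U_i$ to be empty --- and this works uniformly, without having to split into the cyclic and dense cases for subgroups of $\mathbb{R}$.
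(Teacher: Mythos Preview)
Your proof is correct and follows essentially the same approach as the paper. The only cosmetic differences are that the paper cycles (i)$\Rightarrow$(ii)$\Rightarrow$(iii)$\Rightarrow$(i) rather than your (ii)$\Leftrightarrow$(iii), (ii)$\Rightarrow$(i), (i)$\Rightarrow$(ii), and that the paper argues the key bootstrapping step directly---given $a\in\mathbb{M}^i_{>0}$ and $X_i^n\in I$, choose $k$ with $ak>n$, observe $X_i^{ak}=X_i^{ak-n}X_i^n\in I$, then apply m-primality to $(X_i^a)^k$---whereas you phrase the identical Archimedean argument contrapositively via the additive and downward closure of $U_i$.
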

\begin{proof}
(i) $\Rightarrow$ (ii): Assume that $I$ is m-prime, and set
$$T=\{i\mid\text{$X_i^n\in I$ for some $n\in\mathbb M^i_{<0}$}\}.$$
First, observe that if $i\in T$, then $X_i^a\in I$ for all $a\in\mathbb M^i_{>0}$.
Indeed, by definition of $T$, there is an element $n\in\mathbb M^i_{>0}$ such that $X_i^n\in I$. Fix a positive integer $k$ such that $ak > n$.
Since $\mathbb{M}^i$ is closed under subtraction, we have  $ak -n \in \mathbb{M}^i_{> 0}$. 
It follows  that $X_i^{ak}=X_i^{ak - n}X_i^n  \in I$, so the fact that $I$ is m-prime implies that $X_i^a\in I$, as claimed.

Now we show that $I=Q_T$. For the containment $I\supseteq Q_T$, it suffices to show that each generator $X_i^a$ of $Q_T$ is in $I$;
here we have $i\in T$ and $a\in\mathbb M^i_{>0}$. This follows from the above observation. 
For the reverse containment $I\supseteq Q_T$, let $X_1^{\alpha_1}\cdots X_d^{\alpha_d}\in\monset I$.
Since $I$ is m-prime, there is an index $i$ such that $\alpha_i>0$ and $X_i^{\alpha_i}\in I$.
It follows that $i\in T$, so $X_1^{\alpha_1}\cdots X_d^{\alpha_d}\in(X_i^{\alpha_i})R\subseteq Q_T$.
  
  (ii) $\Rightarrow$ (iii): Let $T \subseteq \{1,\ldots,d\}$ and let $I = Q_T$. For $i=1,\ldots,d$ set
  $$\alpha_i=\begin{cases}
0 & \text{if } i \in T \\
\infty &\text{if } i \not \in T.\end{cases}
$$
It is straightforward to show that $I = Q_T=J_{\ul\alpha,\ul 1}$.
  
  (iii) $\Rightarrow$ (i): Let $I = J_{\ul\alpha,\ul 1}$ such that $\alpha_i \in \{0,\infty\}$ for $i=1,\ldots,d$. 
  To show that $I$ is m-prime, let $\ul\gamma,\ul\beta\in\moncrs$ such that
  $\mono{X}{\gamma}\mono{X}{\beta} \in I$. Since $\mono{X}{\gamma}\mono{X}{\beta}$ must be a multiple of one of the generators of $J_{\ul\alpha,\ul 1}$, there exists an $i$ such that $\alpha_i = 0$ and $\gamma_i + \beta_i > 0.$ Hence, either $\gamma_i > 0$ or $\beta_i >0.$ Suppose without loss of generality that $\gamma_i > 0$. Then $X_i^{\gamma_i} \in I,$ which implies that $\mono{X}{\gamma} \in I.$ Hence, $I$ is m-prime.
 \end{proof}

\subsection*{M-Irreducible Decompositions}

\

\noindent
Here we characterize the monomial ideals that can be expressed as finite intersections of monomial irreducible ideals; see \pref{classif2}.
\begin{defin} \label{d4.1}
Let $I \subseteq R$ be a monomial ideal. An \textit{m-irreducible decomposition} of $I$ is a decomposition $I=\bigcap_{\lambda \in \Lambda} I_\lambda$ where each $I_\lambda$ is an m-irreducible monomial ideal of $R$. If the index set $\Lambda$ is finite, we say that $I = \bigcap_{\lambda \in \Lambda} I_\lambda$ is a \textit{finite m-irreducible decomposition}. An m-irreducible decomposition is \textit{irredundant} if for all distinct $\lambda,\mu\in\Lambda$ we have $I_\lambda\not\subseteq I_\mu$.
\end{defin}

\begin{notation}
Given $\ul{\alpha} \in \mathbb{R}_{\infty \geq 0}^d$ and $\ul{\varepsilon} \in \{0,1\}^d$, we set \[ I_{\ul{\alpha},\ul{\varepsilon}} = (\{ \mono{X}{r} \st i = 1,\ldots,d \text{, } r_i \geq_{\varepsilon_i} \alpha_i \text{ and } r_i \in \moninfi \})R. \]
Note that $I_{\ul{\alpha},\ul{\varepsilon}}$ is not in general generated by pure powers of the variables, so it is different from $J_{\ul{\alpha},\ul{\varepsilon}}$.
\end{notation}

\begin{ex}\label{e4.3}
With the zero-vector $\ul{0} = (0,\ldots,0)$, we have $I_{\ul{\alpha},\ul{0}} = (\mono{X}{\alpha})R$. 
From this it follows that if $I$ is a finitely generated monomial ideal in $R$, then $I$ is a finite sum of ideals of the form $I_{\ul{\alpha},\ul{0}}$.
Indeed, let $\underline{X}^{\underline\alpha_1},\ldots,\underline{X}^{\underline\alpha_n}$ be a monomial generating sequence for $I$.
Then we have
$$I=(\underline{X}^{\underline\alpha_1},\ldots,\underline{X}^{\underline\alpha_n})R=\sum_{i=1}^n(\underline{X}^{\underline\alpha_i})R=\sum_{i=1}^nI_{\ul{\alpha}_i,\ul{0}}$$
On the other hand, if $\alpha_i = \infty$ for any $i,$ then $I_{\ul{\alpha},\ul{\varepsilon}} = 0$.

In $R=A[\mathbb{R}_{\geq 0}\times\mathbb{R}_{\geq 0}]$
the ideal $I_{(1,1),(0,1)}$  is graphed in Example~\ref{ex130508a}.
\end{ex}

We think of the  ideal $I_{\ul{\alpha},\ul{\varepsilon}}$ as ``almost principal'' since it is very close to the principal ideal $(\ul X^{\ul\alpha})R$.
Hence, a finite sum of ideals of the form $I_{\ul{\alpha},\ul{\varepsilon}}$ is ``almost finitely generated''.

\begin{prop}\label{pclassif2}
A monomial ideal $I \subseteq R$ has a finite m-irreducible decomposition if and only if it can be expressed as a finite sum of ideals of the form $\ideal{I}{\alpha}{\varepsilon}$.
 If $I$ has a finite m-irreducible decomposition,
then $I$ has a unique irredundant finite m-irreducible decomposition.
 \begin{proof}
  The first statement is proved as in \cite[Theorem 4.12]{ingebretson:dmirsr} using  \pref{classif1}.
  For existence in the second statement, let $I=\cap_{i=1}^mP_i$ be a finite m-irreducible decomposition.
If this decomposition is irredundant, then there is nothing to show. Assume that the decomposition is redundant,
so we have $P_k\subseteq P_j$ for some $k\neq j$. It follows that $I=\cap_{i\neq j}P_i$ is another  finite m-irreducible decomposition.
Continue removing redundancies in this way. The process terminates in finitely many steps since the original decomposition is finite.

To show uniqueness, let $I=\cap_{i=1}^mP_i=\cap_{j=1}^nQ_j$ be two irredundant finite m-irreducible decompositions.
For $s=1,\ldots,m$ it follows  that
$P_s\supseteq I=\cap_{j=1}^nQ_j$, so \lref{2.7} implies that
$$P_s=P_s+\bigcap_{j=1}^nQ_j=\bigcap_{j=1}^n(P_s+Q_j).$$
Since $P_s$ is m-irreducible, it follows that $P_s=P_s+Q_t\supset Q_t$ for some $t$.
Similarly, there is an index $u$ such that $Q_t\supseteq P_u$.
Hence, the irreduncancy of the intersection $\bigcap_{i=1}^mP_i$ implies that $P_s=P_u$, and thus $P_s=Q_t$.
We conclude that $\{P_1,\ldots,P_m\}\subseteq\{Q_1,\ldots,Q_n\}$.
Symmetrically, we have $\{P_1,\ldots,P_m\}\supseteq\{Q_1,\ldots,Q_n\}$,
so the decompositions are equal.
 \end{proof}
\end{prop}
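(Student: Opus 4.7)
The plan is to handle the two assertions separately, leveraging the structural results already in place: \lref{2.7} (distributivity of sums over intersections and vice versa), \pref{classif1} (identifying m-irreducible monomial ideals with the pure-power ideals $J_{\ul{\alpha},\ul{\varepsilon}}$), and \pref{2.5} together with \lref{lcmsub} (describing generators of finite intersections via coordinate-wise maxima). For the ``only if'' direction of the characterization, I would begin with a finite m-irreducible decomposition $I = \bigcap_{k=1}^m J_{\ul{\alpha}_k,\ul{\varepsilon}_k}$ and rewrite each $J_{\ul{\alpha}_k,\ul{\varepsilon}_k}$ as a finite sum $\sum_i K_{k,i}$, where $K_{k,i}$ collects the pure powers of $X_i$ appearing in that ideal (and indices with $(\ul{\alpha}_k)_i = \infty$ drop out since $X_i^\infty=0$). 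Then \lref{2.7} converts $\bigcap_k \sum_i K_{k,i}$ into a sum indexed by tuples $(i_1,\ldots,i_m)$ of intersections $\bigcap_k K_{k,i_k}$, and \pref{2.5} together with \lref{lcmsub} identifies each such intersection as generated by monomials $\mono{X}{r}$ satisfying a joint coordinate-wise $\geq_{\varepsilon'}$-inequality---that is, as an ideal of the form $\ideal{I}{\beta}{\delta}$.

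For the ``if'' direction, I would first establish the dual identity $\ideal{I}{\alpha}{\varepsilon} = \bigcap_{i=1}^{d} J_{\ul{\alpha}^{(i)},\ul{\varepsilon}^{(i)}}$, where $\ul{\alpha}^{(i)}$ has $\alpha_i$ in coordinate $i$ and $\infty$ elsewhere and $\ul{\varepsilon}^{(i)}$ has $\varepsilon_i$ in coordinate $i$; this realizes $\ideal{I}{\alpha}{\varepsilon}$ as a finite intersection of m-irreducibles. Then for $I = \sum_k \ideal{I}{\alpha_k}{\varepsilon_k}$, \lref{2.7} rewrites $I$ as an intersection of finite sums of pure-power-generated ideals; since a sum of pure-power-generated ideals is again pure-power-generated, \pref{classif1} guarantees that each such summand is m-irreducible, producing the desired finite m-irreducible decomposition.

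For the second assertion, existence of an irredundant decomposition is immediate: starting from any finite m-irreducible decomposition $I = \bigcap_{i=1}^m P_i$, whenever $P_k \subseteq P_j$ for distinct $k,j$ the term $P_j$ may be deleted without changing the intersection, and this process halts in finitely many steps. For uniqueness, given two irredundant decompositions $I = \bigcap_i P_i = \bigcap_j Q_j$, fix $s$; since $P_s \supseteq I$ one has $P_s = P_s + \bigcap_j Q_j = \bigcap_j(P_s+Q_j)$ by \lref{2.7}, so m-irreducibility of $P_s$ forces $Q_t \subseteq P_s$ for some $t$. By symmetry some $P_u \subseteq Q_t$, and then irredundancy of $\bigcap_i P_i$ gives $P_u = P_s = Q_t$, whence $\{P_i\} \subseteq \{Q_j\}$; symmetry completes the proof.

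The main obstacle I anticipate lies in the forward direction of the characterization: carefully tracking how the strict versus non-strict parameter $\ul{\varepsilon}'$ emerges when taking lcms of pure powers coming from ideals with differing $\ul{\varepsilon}$-data, and confirming that the resulting intersection is of precisely the form $\ideal{I}{\beta}{\delta}$ (with the correct $\infty$-entries in coordinates not hit by the chosen tuple). The uniqueness portion, by contrast, is a short and clean application of \lref{2.7} and the definition of m-irreducibility.
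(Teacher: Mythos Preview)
Your proposal is correct and follows essentially the same approach as the paper. For the characterization, the paper simply defers to \cite[Theorem~4.12]{ingebretson:dmirsr} together with \pref{classif1}, whereas you spell out the argument via \lref{2.7}, \pref{2.5}, and \pref{classif1}; for existence and uniqueness of the irredundant decomposition, your argument is word-for-word the paper's (remove redundancies; then use \lref{2.7} to write $P_s=\bigcap_j(P_s+Q_j)$, apply m-irreducibility, and finish with irredundancy and symmetry).
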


\begin{cor}\label{cor130513a}
Every finitely generated monomial ideal $I \subseteq R$ has a finite m-irreducible decomposition.
\end{cor}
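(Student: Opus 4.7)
The plan is to deduce this directly from \pref{classif2} together with \eref{4.3}. The characterization in \pref{classif2} says that a monomial ideal $I \subseteq R$ admits a finite m-irreducible decomposition if and only if $I$ can be written as a finite sum of ideals of the form $\ideal{I}{\alpha}{\varepsilon}$. So the entire task reduces to exhibiting such a sum expression for any finitely generated monomial ideal.

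First I would fix an irredundant (or any finite) monomial generating sequence $\ul{X}^{\ul{\alpha}_1}, \ldots, \ul{X}^{\ul{\alpha}_n}$ for $I$, which exists since $I$ is finitely generated and any set of generators can be refined to a monomial one by \fref{2.2}. Then I would invoke the observation in \eref{4.3} that $I_{\ul{\alpha}_i,\ul{0}} = (\ul{X}^{\ul{\alpha}_i})R$ for each $i$, which expresses each principal monomial ideal as one of the special ideals $\ideal{I}{\alpha}{\varepsilon}$ (with $\ul{\varepsilon}=\ul{0}$).

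Combining these observations yields
\[ I = (\ul{X}^{\ul{\alpha}_1}, \ldots, \ul{X}^{\ul{\alpha}_n})R = \sum_{i=1}^n (\ul{X}^{\ul{\alpha}_i})R = \sum_{i=1}^n I_{\ul{\alpha}_i,\ul{0}}, \]
which is precisely a finite sum of ideals of the prescribed form. An application of \pref{classif2} then produces the desired finite m-irreducible decomposition (and, in fact, a unique irredundant one).

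There is no real obstacle here; the content of the corollary is entirely absorbed into the prior results, and the proof is simply a matter of assembling the pieces. The only small point worth mentioning is that one must note that $\alpha_i \ne \infty$ in each coordinate of each generator (otherwise the corresponding monomial would be $0$, not a genuine generator), so each summand $I_{\ul{\alpha}_i,\ul{0}}$ really is nonzero when the generator is nonzero, and the sum expression is legitimate.
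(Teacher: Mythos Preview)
Your proof is correct and follows exactly the paper's approach: the paper's proof is the single line ``This follows from \eref{4.3} and \pref{classif2},'' and you have simply spelled out the content of \eref{4.3} (writing $I=\sum_{i=1}^n I_{\ul{\alpha}_i,\ul{0}}$) before invoking \pref{classif2}. There is nothing to add.
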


\begin{proof}
This follows from \eref{4.3} and \pref{classif2}.
\end{proof}

\begin{ex}\label{ex130508c}
Here we illustrate an m-irreducible decomposition of the ideal $I=I_{(1,1),(0,1)}$
in $R=A[\mathbb{R}_{\geq 0}\times\mathbb{R}_{\geq 0}]$
from Example~\ref{ex130508a}.
As with such decompositions in the standard polynomial ring $A[X_1,X_2]$,
the key is to use the graph of the monomial set $\monset I$ to find the decomposition.
The first diagram in the following display is the graph of $\monset{I}$.
The second one shows how we use the boundary lines from $\monset I$ to write $\monset I$ as the
intersection $\monset J\cap\monset K$ where $J$ and $K$ are generated by pure powers of $X_2$ and $X_1$, respectively.
The third and fourth diagrams show $J=J_{(\infty,1),(0,1)}$ and $K=J_{(1,\infty),(0,1)}$ separately.

\

\begin{center}
\begin{tikzpicture}
	\draw[->] (-0.2,0) -- (3.4,0);
	\draw[->] (0,-0.2) -- (0,3.4);
	\draw (1.5,-0.2) node[below,scale=.75]{$1$} -- (1.5,0.2);
	\draw (3,-0.2) node[below,scale=.75]{$2$} -- (3,0.2);
	\draw (-0.2,1.5) node[left,scale=.75]{$1$} -- (0.2,1.5);
	\draw (-0.2,3) node[left,scale=.75]{$2$} -- (0.2,3);
	\draw[fill,color=black!50] (1.5,1.5) -- (3.2,1.5) -- (3.2,3.2) -- (1.5,3.2) -- cycle;
	\draw (1.5,1.5) -- (1.5,3.2);
	\draw[color=white] (1.5,1.5) -- (3.2,1.5);
	\draw[dashed] (1.5,1.5) -- (3.2,1.5);
	\draw (1.5,1.5)[fill,color=white] circle (2pt);
	\draw (1.5,1.5) circle (2pt);
\end{tikzpicture}
\qquad
\qquad
\qquad
\begin{tikzpicture}
	\draw (1.5,-0.2) node[below,scale=.75]{$1$} -- (1.5,0.2);
	\draw (3,-0.2) node[below,scale=.75]{$2$} -- (3,0.2);
	\draw (-0.2,1.5) node[left,scale=.75]{$1$} -- (0.2,1.5);
	\draw (-0.2,3) node[left,scale=.75]{$2$} -- (0.2,3);
	\draw[fill,color=black!15] (0,1.5) -- (3.2,1.5) -- (3.2,3.2) -- (0,3.2) -- cycle;
	\draw[fill,color=black!35] (1.5,0) -- (3.2,0) -- (3.2,3.2) -- (1.5,3.2) -- cycle;
	\draw[fill,color=black!50] (1.5,1.5) -- (3.2,1.5) -- (3.2,3.2) -- (1.5,3.2) -- cycle;
	\draw[->] (-0.2,0) -- (3.4,0);
	\draw[->] (0,-0.2) -- (0,3.4);
	\draw (1.5,0) -- (1.5,3.2);
	\draw[color=white] (0,1.5) -- (3.2,1.5);
	\draw[dashed] (0,1.5) -- (3.2,1.5);
	\draw (1.5,1.5)[fill,color=white] circle (2pt);
	\draw (1.5,1.5) circle (2pt);
\end{tikzpicture}
\end{center}

\

\begin{center}
\begin{tikzpicture}
	\draw (1.5,-0.2) node[below,scale=.75]{$1$} -- (1.5,0.2);
	\draw (3,-0.2) node[below,scale=.75]{$2$} -- (3,0.2);
	\draw (-0.2,1.5) node[left,scale=.75]{$1$} -- (0.2,1.5);
	\draw (-0.2,3) node[left,scale=.75]{$2$} -- (0.2,3);
	\draw[fill,color=black!15] (0,1.5) -- (3.2,1.5) -- (3.2,3.2) -- (0,3.2) -- cycle;
	\draw[->] (-0.2,0) -- (3.4,0);
	\draw[->] (0,-0.2) -- (0,3.4);
	\draw[color=white] (0,1.5) -- (3.2,1.5);
	\draw[dashed] (0,1.5) -- (3.2,1.5);
\end{tikzpicture}
\qquad
\qquad
\qquad
\begin{tikzpicture}
	\draw (1.5,-0.2) node[below,scale=.75]{$1$} -- (1.5,0.2);
	\draw (3,-0.2) node[below,scale=.75]{$2$} -- (3,0.2);
	\draw (-0.2,1.5) node[left,scale=.75]{$1$} -- (0.2,1.5);
	\draw (-0.2,3) node[left,scale=.75]{$2$} -- (0.2,3);
	\draw[fill,color=black!35] (1.5,0) -- (3.2,0) -- (3.2,3.2) -- (1.5,3.2) -- cycle;
	\draw[->] (-0.2,0) -- (3.4,0);
	\draw[->] (0,-0.2) -- (0,3.4);
	\draw (1.5,0) -- (1.5,3.2);
\end{tikzpicture}
\end{center}
One checks readily that $I=J\cap K$, say, using \pref{2.5}.

Note that Example~\ref{ex130508i} provides another m-irreducible decomposition. 
Moreover, it shows that one needs to be careful when using these
diagrams to generate decompositions, as the rough shape of the diagram (ignoring the distinction between 
dashed and solid lines, etc.) does not contain enough information.
\end{ex}

We conclude this section with a discussion of (possibly infinite) irredundant m-irreducible decompositions, beginning with existence.
\newcommand{\lola}[1]{\mathfrak{C}_{#1}}

\begin{prop}\label{p4.14}
Let $I$ be a monomial ideal in $R$, and let $\lola{I}$ denote the set of m-irreducible monomial ideals of $R$ that contain $I$.
Let $\lola{I}'$ denote the set of minimal elements of $\lola{I}$ with respect to containment.
\begin{enumerate}[\rm(a)]
\item \label{p4.14a}
For every $J\in \lola{I}$, there is an ideal $J'\in\lola{I}'$ such that $J'\subseteq J$.
\item \label{p4.14b}
With $\ul{1} = (1,\ldots,1)$, we have the following m-irreducible decompositions
$$I = \bigcap_{\mono{X}{r} \not \in I} J_{\ul{r},\ul{1}} =\bigcap_{J\in\lola{I}} J=\bigcap_{J\in\lola{I}'} J.$$
The third decomposition is irredundant.
\end{enumerate}
\end{prop}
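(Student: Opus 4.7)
The plan is to establish the leftmost decomposition in (b) directly, deduce the middle one by sandwiching, then prove (a) by Zorn's Lemma, and finally use (a) to obtain the $\lola{I}'$-decomposition with its irredundancy.

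For the leftmost equality in (b), fix a monomial $\mono{X}{r}\notin\monset{I}$. I claim $I\subseteq J_{\ul r,\ul 1}$: for any $\mono X q\in\monset I$, the monomial $\mono X q$ cannot divide $\mono X r$ (else $\mono X r\in I$), so by \fref{2.2}\eqref{item130521b} some index $i$ has $q_i>r_i$, whence $X_i^{q_i}$ is a generator of $J_{\ul r,\ul 1}$ that contains $\mono X q$. Conversely, $\mono X r \notin J_{\ul r,\ul 1}$ since every generator $X_i^{s_i}$ of $J_{\ul r,\ul 1}$ satisfies $s_i>r_i$ and cannot divide $\mono X r$. These two observations combine to give the leftmost equality; and since each $J_{\ul r,\ul 1}$ is m-irreducible by \pref{classif1}, hence lies in $\lola I$, sandwiching yields the middle equality.

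For part (a), I would apply Zorn's Lemma to $\{K\in\lola I\st K\subseteq J\}$ ordered by reverse containment. The main task is to show that for any chain $\{K_\mu\}_{\mu\in\Lambda}$ in $\lola I$ the intersection $K_\infty := \bigcap_{\mu\in\Lambda} K_\mu$ is again in $\lola I$. It clearly contains $I$; for m-irreducibility, for each $i$ set $T_{\mu,i} := \{r\in\mongeqi\st X_i^r\in K_\mu\}$ and $T_{\infty,i} := \bigcap_\mu T_{\mu,i}$. Each $T_{\mu,i}$ is a terminal segment of $\mongeqi$ (closed upward), hence so is $T_{\infty,i}$. The ideal $K$ generated by the pure powers $\{X_i^r\st r\in T_{\infty,i},\ i=1,\ldots,d\}$ then has the form $J_{\ul\gamma,\ul\delta}$ for suitable $\ul\gamma,\ul\delta$, and so is m-irreducible by \pref{classif1}. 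Clearly $K\subseteq K_\infty$; for the reverse I would argue by contradiction: if $\mono X r \in K_\infty\smallsetminus K$, then $r_i\notin T_{\infty,i}$ for every $i$, so each $i$ admits some $\mu_i\in\Lambda$ with $r_i\notin T_{\mu_i,i}$. Choosing $\mu^*$ so that $K_{\mu^*}$ is the smallest of $K_{\mu_1},\ldots,K_{\mu_d}$ in the chain gives $T_{\mu^*,i}\subseteq T_{\mu_i,i}$ and hence $r_i\notin T_{\mu^*,i}$ for every $i$; but then no pure-power generator of the m-irreducible ideal $K_{\mu^*}$ divides $\mono X r$, contradicting $\mono X r\in K_{\mu^*}$. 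Zorn's Lemma now furnishes a minimal element of $\lola I$ contained in $J$.

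Finally, for the third decomposition in (b), the containment $I\subseteq\bigcap_{J\in\lola I'}J$ is immediate from $\lola I'\subseteq\lola I$; conversely, any $\mono X r\notin\monset I$ fails to lie in $J_{\ul r,\ul 1}\in\lola I$, and (a) supplies some $J'\in\lola I'$ with $J'\subseteq J_{\ul r,\ul 1}$, so $\mono X r\notin J'$ either. Irredundancy is automatic: for distinct $J,J'\in\lola I'$, a containment $J\subseteq J'$ would force $J\subsetneq J'$, contradicting the minimality of $J'$. The hard part is showing $K_\infty$ is m-irreducible; this hinges on \pref{classif1}, which reduces monomial membership to a coordinate-by-coordinate condition and lets the chain hypothesis collapse $d$ ``witnesses of failure'' into one via a minimum of $d$ elements of the chain.
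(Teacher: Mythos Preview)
Your proof is correct. Most of it parallels the paper: both use Zorn's Lemma for (a), both obtain the equalities in (b) by a sandwich argument, and both get irredundancy for free from minimality. Two differences are worth noting. First, you supply a direct argument for the leftmost equality $I = \bigcap_{\mono X r \notin I} J_{\ul r,\ul 1}$, whereas the paper simply cites \cite[Proposition~4.14]{ingebretson:dmirsr}. Second, and more substantively, your route to showing that the intersection $K_\infty$ of a chain in $\lola I$ is again m-irreducible is different. You invoke \pref{classif1} to reduce to pure-power generators, identify $K_\infty$ explicitly as some $J_{\ul\gamma,\ul\delta}$ built from the intersected terminal segments $T_{\infty,i}$, and verify this identification by a coordinate-wise argument: if some $\mono X r \in K_\infty$ failed to lie in the candidate ideal, each coordinate $i$ would witness failure at some chain member $K_{\mu_i}$, and taking the minimum of those finitely many chain members yields a single $K_{\mu^*}$ at which all coordinates fail simultaneously, a contradiction. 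The paper instead argues from the definition of m-irreducibility: given a decomposition $K_\infty = M \cap N$, \lref{2.7} gives $K_\mu = (K_\mu + M) \cap (K_\mu + N)$ for every chain member $K_\mu$, so each $K_\mu$ contains $M$ or $N$, and a short case split then forces $K_\infty$ to contain one of them. Your argument is more constructive and actually pins down $K_\infty$ as an explicit $J_{\ul\gamma,\ul\delta}$, at the cost of depending on the pure-power classification \pref{classif1}; the paper's argument is purely lattice-theoretic and would go through in any setting where the distributive identity of \lref{2.7} holds, without invoking the classification at all.
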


\begin{proof}
\eqref{p4.14a}
Let $J\in\lola I$, and let $\lola{J,I}$ denote the set of ideals $K\in\lola{I}$ contained in $J$.
By Zorn's Lemma, it suffices to show that every chain $\mathfrak T$ in $\lola{J,I}$ has a lower bound in $\lola{J,I}$.
To this end, it suffices to show that the ideal $L:=\cap_{K\in\mathfrak T}K$ is m-irreducible.
If the chain $\mathfrak T$ has a minimal element, then $L$ is the minimal element, hence it is m-irreducible.
Thus, we assume that $\mathfrak T$ does not have a minimal element.
Fact~\ref{f2.2}\eqref{item130521d} shows that $L$ is a monomial ideal of $R$,
and the containments $L\subseteq J\subsetneq R$ implies that $L\neq R$.
Thus, to show that $L$ is m-irreducible, let $M$ and $N$ be monomial ideals of $R$ such that
$L=M\cap N$; we need to show that $L=M$ or $L=N$.
Since we have $L=M\cap N\subseteq M$, and similarly $L\subseteq N$,
it suffices to show that $L\supseteq M$ or $L\supseteq N$.

For each  $K\in\mathfrak T$, we have
$K\supset L=M\cap N$, so Lemma~\ref{l2.7} implies that
$$K=K+L=K+(M\cap N)=(K+M)\cap(K+N).$$
The fact that $K$ is m-irreducible implies that
either $K=K+M\supseteq M$ or $K=K+N\supseteq N$.

Case 1. For every $K\in\mathfrak T$, there is a $K'\in\mathfrak T$ such that $K\supseteq K'\supseteq M$.
In this case, it follows that $L=\cap_{K\in\mathfrak T}K\supseteq M$, as desired.

Case 2. There is an ideal $K\in\mathfrak T$ such that for every 
$K'\in\mathfrak T$ with $K\supseteq K'$, one has $K'\not\supseteq M$.
(Note that the fact that $\mathfrak T$ does not have a minimal element implies that 
at least one such $K'$ exists.)
From the paragraph before Case 1, we conclude that for every 
$K'\in\mathfrak T$ with $K\supseteq K'$, one has $K'\supseteq N$.
It now follows that $L\supseteq N$, as desired.

\eqref{p4.14b}
If $I=R$, then the desired conclusions are trivial since the empty intersection of ideals of $R$ is itself $R$.
Thus, we assume that $I\neq R$.
The equality $I = \bigcap_{\mono{X}{r} \not \in I} J_{\ul{r},\ul{1}}$
is proved like \cite[Proposition~4.14]{ingebretson:dmirsr}.
For each monomial $\mono{X}{r} \not \in I$, it follows that $J_{\ul{r},\ul{1}}\in\lola{I}$, so we have
the first containment in the following display.
\begin{align*}
I 
&= \bigcap_{\mono{X}{r} \not \in I} J_{\ul{r},\ul{1}}
\supseteq \bigcap_{J\in\lola{I}} J
\supseteq \bigcap_{J\in\lola{I}'} J
\supseteq I
\end{align*}
The second containment follows from part~\eqref{p4.14a},
and the third containment follows from the definition of $\lola{I}'$.
This establishes the desired decompositions.
Finally, the decomposition $\bigcap_{J\in\lola{I}'} J$ is irredundant because
there are no proper containments between minimal elements of$\lola I$, by definition.
\end{proof}

The following example shows that infinite irredundant m-irreducible decompositions need not be unique.

\begin{ex}\label{ex131224a}
Set $d=2$ and $ I= \left( \{ X^r Y^{1-r} \mid 0 \leq r \leq 1 \} \right)R$ with $\mathbb M_i=\mathbb R$ for $i=1,2$.
In~\cite[Example 4.13]{ingebretson:dmirsr}, it is shown that $I$ does not admit a finite m-irreducible decomposition.
However, it is straightforward to show that the following m-irreducible decompositions are irredundant and distinct:
$$I=\bigcap_{0\leq r\leq 1}J_{(r,1-r),(1,1)}=\bigcap_{0\leq r\leq 1}J_{(r,1-r),(1,0)}.$$
Moreover, one can use this idea to construct infinitely many distinct irredundant m-irreducible decompositions of $I$.
Indeed, for every subset $S$ of the closed interval $[0,1]$, we have
$$I=\left(\bigcap_{r\in S}J_{(r,1-r),(1,1)}\right)\bigcap\left(\bigcap_{r\in[0,1]\smallsetminus S}J_{(r,1-r),(1,0)}\right).$$

\end{ex}

\section{An Extended Example} \label{sec130510a}

\newcommand{\sss}{s}
\newcommand{\SSS}{S}
\newcommand{\ttt}{\varepsilon}

Here we show how to use discrete techniques
from~\cite{paulsen:eiwg} to compute some decompositions in our setting.
This section's main result is \tref{hm130513a}.

\begin{Assumption}\label{a130510a}
Throughout this section,  $I$ is an ideal in the ring
$R = A[\mathbb{R}_{\geq 0}\times\cdots\times\mathbb{R}_{\geq 0}]$
generated by a non-empty set of monomials of the form
$X_i^aX_j^a$ with $i\neq j$ and $a\in \mathbb{R}_{> 0}$. 
Also, we consider the standard polynomial ring $S=A[X_1,\ldots,X_d]$.
Let $\Omega$ denote the following set of intervals:
$$\Omega=\{(a,\infty)\mid a\in\mathbb R_{\geq 0}\}\cup \{[b,\infty)\mid b\in\mathbb R_{> 0}\}.$$
\end{Assumption}

\begin{ex}\label{ex130508d}
In the case $d=3$, we may consider the ideal
$$I=(X_1^aX_2^a,X_2^bX_3^b\mid a\geq 1,b>2)R=(X_1X_2,X_2^bX_3^b\mid b>2)R.$$
By \pref{2.5} it is routine to verify the  irredundant decomposition
\begin{align*}
I&=(X_1,X_2^b\mid b>2)R\cap(X_1,X_3^b\mid b>2)R\cap(X_2)R\\
&=J_{(1,\infty,2),(0,0,1)}\cap J_{(\infty,1,\infty),(0,0,0)}\cap J_{(1,2,\infty),(0,1,0)}.
\end{align*}
\end{ex}

\begin{notation}\label{notn130512a}
Define $\Gamma$ to be the finite simple graph with vertex set $V=\{1,\ldots,d\}$ and edge set 
$$E=\{ij\mid \text{$i\neq j$ and $X_i^aX_j^a\in I$ for some $a>0$}\}$$
where $ij=\{i,j\}$.
For each $ij\in E$, set
\begin{align*}
\SSS(ij)&=\{a>0\mid X_i^aX_j^a\in I\}. 
\\
\sss(ij)&=\inf \SSS(ij)\\
\ttt(ij)&=\begin{cases} 0 & \text{if $\sss(ij)\in \SSS(ij)$} \\ 1 & \text{if $\sss(ij)\notin \SSS(ij)$.}\end{cases}
\end{align*}
This defines  functions 
$\sss\colon E\to\mathbb R_{\geq 0}$ and $\ttt\colon E\to\{0,1\}$.
\end{notation}

\begin{ex}\label{ex130510a}
Continue with the ideal $I$ from Example~\ref{ex130508d}.
The graph $\Gamma$ in this case is
$$\xymatrix{1\ar@{-}[r] &2\ar@{-}[r]&3.}$$
And the values $\SSS(ij)$
are
\begin{align*}
\SSS(12)&=[1,\infty)&
\SSS(23)&=(2,\infty)
\\
\sss(12)&=1 & \sss(23)&=2 \\
\ttt(12)&=0&\ttt(23)&=1.
\end{align*}
\end{ex}

\begin{fact}\label{fact130510a}
For each $ij\in E$, the set $\SSS(ij)$ is an interval. Indeed, if $a\in \SSS(ij)$, then $X_i^aX_j^a\in I$, so for all $r>0$, we have
$X_i^{r+a}X_j^{r+a}=X_i^{r}X_j^{r}X_i^{a}X_j^{a}\in I$, implying that $r+a\in \SSS(ij)$.
Moreover, it is straightforward to show that
$$\SSS(ij)=\begin{cases} [\sss(ij),\infty) & \text{if $\ttt(ij)=0$} \\ (\sss(ij),\infty) & \text{if $\ttt(ij)=1$.}\end{cases}
$$
In particular, we have a function $\SSS\colon E\to\Omega$.

The ideal $I$ is a finite sum $\sum_{ij\in E}I_{\underline\alpha(ij),\underline\epsilon(ij)}$. 
This essentially follows from the previous paragraph, with the following definitions of $\underline\alpha(ij)$ and $\underline\epsilon(ij)$:
\begin{align*}
\alpha(ij)_k&=\begin{cases}
\infty & \text{if $k\notin\{i,j\}$} \\
\sss(ij) & \text{if $k\in\{i,j\}$}\end{cases}
&
\epsilon(ij)_k&=\begin{cases}
1 & \text{if $k\notin\{i,j\}$} \\
\ttt(ij) & \text{if $k\in\{i,j\}$.}\end{cases}
\end{align*}
\pref{classif2} implies that $I$ has a finite m-irreducible decomposition. 
\end{fact}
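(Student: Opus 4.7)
The plan is to prove the four assertions of the fact in turn: the interval description of $\SSS(ij)$, the existence of the function $\SSS\colon E\to\Omega$, the decomposition of $I$ as a finite sum of ideals of the form $I_{\ul\alpha,\ul\varepsilon}$, and the existence of a finite m-irreducible decomposition of $I$.

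For the first assertion, I would fix $ij\in E$ and first show that $\SSS(ij)$ is upward closed in $\mathbb R_{>0}$. If $a\in \SSS(ij)$ and $r>0$, then $X_i^a X_j^a\in I$ by definition, and since $I$ is an ideal, $X_i^{a+r}X_j^{a+r} = X_i^r X_j^r\cdot X_i^a X_j^a\in I$, so $a+r\in \SSS(ij)$. Combined with the definition $\sss(ij)=\inf \SSS(ij)$, this forces $(\sss(ij),\infty)\subseteq \SSS(ij)$: given $c>\sss(ij)$, choose $a\in \SSS(ij)$ with $\sss(ij)\leq a<c$ and apply upward closure with $r=c-a$. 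Whether $\sss(ij)$ itself lies in $\SSS(ij)$ is exactly what $\ttt(ij)$ encodes by definition, so $\SSS(ij)$ is either $[\sss(ij),\infty)$ or $(\sss(ij),\infty)$. A short case check then confirms $\SSS(ij)\in\Omega$ in each case: in the open case $\sss(ij)\geq 0$; in the closed case $\sss(ij)\in \SSS(ij)\subseteq \mathbb R_{>0}$ forces $\sss(ij)>0$. This also yields the function $\SSS\colon E\to\Omega$.

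For the sum decomposition $I=\sum_{ij\in E} I_{\ul\alpha(ij),\ul\varepsilon(ij)}$, I would compare monomial generators on both sides using \fref{2.2}. Unpacking the notation $I_{\ul\alpha(ij),\ul\varepsilon(ij)}$ with the prescribed parameters, one sees that the summand indexed by $ij$ is generated precisely by the monomials $X_i^a X_j^a$ with $a\in \SSS(ij)$ (and their multiples), which is the sub-ideal of $I$ associated to the edge $ij$. Summing over all $ij\in E$ recovers every monomial generator of $I$, since by definition of $\Gamma$ each such generator is of the form $X_i^a X_j^a$ with $ij\in E$ and $a\in \SSS(ij)$, so both containments follow. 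Once the sum expression is established, the existence of a finite m-irreducible decomposition follows immediately from \pref{classif2}.

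The main obstacle I anticipate is the unpacking of $I_{\ul\alpha(ij),\ul\varepsilon(ij)}$ with the specific parameter choices, where $\alpha(ij)_k$ and $\varepsilon(ij)_k$ for $k\notin\{i,j\}$ play the role of ``padding.'' One must verify that these padded coordinates interact with the general definition of $I_{\ul\alpha,\ul\varepsilon}$ so that the resulting ideal is exactly the edge-$ij$ sub-ideal of $I$, rather than degenerating (for instance, to the zero ideal as Example~\ref{e4.3} warns can happen when $\infty$ appears in $\ul\alpha$). Once this identification is justified, every remaining step is a formal consequence of the upward-closure lemma above, \fref{2.2}, and \pref{classif2}.
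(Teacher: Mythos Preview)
Your approach is exactly the paper's own: the Fact carries its justification inline (the upward-closure argument, then ``straightforward'' for the interval shape, then ``essentially follows from the previous paragraph'' for the sum, then \pref{classif2}), and you have simply filled in those details in the same order.

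The one place you pause is well chosen, and your concern is in fact justified rather than merely hypothetical. With the padding values $\alpha(ij)_k=\infty$ for $k\notin\{i,j\}$ as printed, Example~\ref{e4.3} gives $I_{\ul\alpha(ij),\ul\varepsilon(ij)}=0$, so the displayed sum collapses; the same issue appears in Example~\ref{ex130510e}. The intended padding is $\alpha(ij)_k=0$ and $\varepsilon(ij)_k=0$ for $k\notin\{i,j\}$, so that the constraint $r_k\geq_0 0$ is vacuous. With that correction one checks, exactly along the lines you sketch, that $\monset{I_{\ul\alpha(ij),\ul\varepsilon(ij)}}=\{\ul X^{\ul r}\mid r_i,r_j\in\SSS(ij)\}=\monset{(X_i^aX_j^a\mid a\in\SSS(ij))R}$, and then your comparison of generators via \fref{2.2} and the appeal to \pref{classif2} finish the argument.
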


\begin{ex}\label{ex130510e}
With the ideal $I$ from Example~\ref{ex130508d},
we have
$$I=I_{(1,1,\infty),(0,0,1)}+I_{(\infty,2,2),(1,1,1)}.$$
\end{ex}

\begin{defin}\label{defn130510a}
A \textit{vertex cover} of the graph $\Gamma$ is a subset $W\subseteq V$ such that
for all $ij\in E$ we have either $i\in W$ or $j\in W$.
A vertex cover is \textit{minimal} if it is minimal in the set of all vertex covers with respect to inclusion.
\end{defin}

\begin{ex}\label{ex130510b}
Continue with the ideal $I$ from Example~\ref{ex130508d}.
The graph $\Gamma$ in this case 
has two minimal vertex covers, namely $\{1,3\}$ and $\{2\}$. It has the following non-minimal vertex covers: $\{1,2\}$, $\{2,3\}$, and $\{1,2,3\}$.
We will see below that the  irredundant m-irreducible decomposition of $I$ is given by the vertex covers
$\{1,2\}$, $\{1,3\}$, and $\{2\}$ with some additional data. 
\end{ex}

The work from~\cite{paulsen:eiwg} takes its cue from the following decomposition result that we know from~\cite[Proposition 6.1.16]{villarreal:ma}.

\begin{fact}\label{fact130610a}
The \textit{edge ideal} of the finite simple graph $\Gamma$ is the ideal $I(\Gamma)=(X_iX_j\mid ij\in E)S$.
Then we have the following m-irreducible decompositions
$$I(\Gamma)=\bigcap_{W}Q_W=\bigcap_{\text{$W$ min}}Q_W$$
where the first intersection is taken over all vertex covers of $\Gamma$, and the second
intersection is taken over all minimal vertex covers of $\Gamma$. The second intersection is irredundant.
\end{fact}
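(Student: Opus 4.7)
My plan is to reduce both equalities to a monomial-level criterion together with one combinatorial observation about vertex covers. The starting point is that for $W\subseteq V$, a monomial $f=X_1^{a_1}\cdots X_d^{a_d}\in S$ lies in $Q_W=(X_i\mid i\in W)S$ if and only if the support $T(f):=\{i\mid a_i>0\}$ meets $W$. By \fref{2.2}, containments of monomial ideals can be tested on monomials, so $f\in\bigcap_W Q_W$ (intersection over all vertex covers $W$) if and only if $T(f)$ meets every vertex cover of $\Gamma$.

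The one real step will be a short combinatorial lemma: a subset $T\subseteq V$ meets every vertex cover of $\Gamma$ if and only if $T$ contains both endpoints of some edge $ij\in E$. The ``if'' direction is immediate, since any vertex cover must contain $i$ or $j$. For the converse, if $T$ contains no edge, then $T$ is independent, so $V\smallsetminus T$ is a vertex cover disjoint from $T$. Applied to $T(f)$, this gives $f\in\bigcap_W Q_W$ iff $f$ is divisible by some $X_iX_j$ with $ij\in E$, iff $f\in I(\Gamma)$; this proves the first equality.

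For the second equality and the irredundancy claim, I would use the basic fact that $Q_W\subseteq Q_{W'}$ iff $W\subseteq W'$ (immediate, since the prime ideal $Q_{W'}$ contains the variable $X_i$ precisely when $i\in W'$). Every vertex cover contains a minimal vertex cover, so each non-minimal factor $Q_W$ is redundant in the full intersection, giving $\bigcap_W Q_W=\bigcap_{W\text{ min}} Q_W$. The same containment criterion yields irredundancy: for distinct minimal vertex covers $W\neq W'$, neither can contain the other (else the smaller would contradict minimality of the larger), hence $Q_W\not\subseteq Q_{W'}$. I do not anticipate a serious obstacle; the entire content is the short combinatorial lemma in the middle paragraph, and the rest is transparent bookkeeping with the monomial tools developed in \sref{finite}.
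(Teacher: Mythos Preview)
Your argument is correct. The paper does not actually prove this statement: it is recorded as a \emph{Fact} and attributed to \cite[Proposition~6.1.16]{villarreal:ma}, so there is no proof in the paper to compare against. Your self-contained argument via the combinatorial lemma (a subset $T\subseteq V$ meets every vertex cover iff $T$ contains an edge) is the standard route and works exactly as you describe; the reduction to monomials via \fref{2.2} and the passage to minimal covers are routine.

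One small wording issue: you call $Q_{W'}$ a ``prime ideal,'' but the paper only assumes $A$ is a non-zero commutative ring, so $Q_{W'}=(X_i\mid i\in W')S$ need not be prime. This does not affect your argument, since what you actually use is the elementary observation that $X_i\in Q_{W'}$ iff $i\in W'$ (equivalently, that $Q_{W'}$ is m-prime in the sense of \dref{mprime}), which holds regardless. Adjust the justification accordingly and the proof is complete.
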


\begin{ex}\label{ex130510d}
Continue with the graph $\Gamma$ from Example~\ref{ex130510a}.
Using the minimal vertex covers from Example~\ref{ex130510b}, we have
$$I(\Gamma)=Q_{\{1,3\}}\cap Q_{\{2\}}=(X_1,X_3)S\cap(X_2)S.$$
One can verify these equalities using \pref{2.5}, and the irredundancy is straightforward.
\end{ex}

To prepare for the decomposition result for the ideal $I$, we review the decomposition result from~\cite{paulsen:eiwg}
for weighted edge ideals.

\begin{defin}\label{defn130510b}
A \textit{weight function} for the  graph $\Gamma$ is a function $\omega\colon E\to\mathbb Z_{>0}$.
For each  $ij\in E$, the value $\omega(ij)$ is the \textit{weight} of the edge $ij$.
Write $\Gamma_{\omega}$ for the ordered pair $(\Gamma,\omega)$.
Fix a weight function $\omega$ of $\Gamma$.

A \textit{weighted vertex cover} of $\Gamma_{\omega}$ is
a pair $(W,\delta)$ such that
\begin{enumerate}[(1)]
\item $W$ is a vertex cover of $\Gamma$, and
\item $\delta\colon W\to \mathbb Z_{>0}$ is a function such that for each edge $ij\in E$, either
\begin{enumerate}[(a)]
\item $i\in W$ and $\delta(i)\leq\omega(ij)$, or
\item $j\in W$ and $\delta(j)\leq\omega(ij)$.
\end{enumerate}
\end{enumerate}
The value $\delta(i)$ is the \textit{weight} of the vertex $i$.

Given two weighted vertex covers 
$(W,\delta)$ and $(W',\delta')$
of $\Gamma_{\omega}$, we write
$(W,\delta)\leq(W',\delta')$
provided that
\begin{enumerate}[(1)]
\item $W\subseteq W'$, and
\item for all $i\in W$, we have $\delta(i)\geq\delta'(i)$.
\end{enumerate}
A weighted vertex cover is \textit{minimal} if it is minimal in the set of all weighted vertex covers with respect to this ordering.

Given a weighted vertex cover $(W,\delta)$ 
of $\Gamma_{\omega}$, set
$$P_{W,\delta}=(X_i^{\delta(i)}\mid i\in W)S.$$
The \textit{weighted edge ideal} of $\Gamma_{\omega}$ is the ideal
$$I(\Gamma_{\omega})=(X_i^{\omega(ij)}X_i^{\omega(ij)}\mid ij\in E)S.$$
\end{defin}

\begin{fact}\label{fact130510b}
Given a weight function $\omega$ of $\Gamma$,
we have the following m-irreducible decompositions
$$I(\Gamma_{\omega})=\bigcap_{(W,\delta)}P_{W,\delta}=\bigcap_{\text{$(W,\delta)$ min}}P_{W,\delta}$$
where the first intersection is taken over all weighted vertex covers of $\Gamma_{\omega}$, and the second
intersection is taken over all minimal weighted vertex covers of $\Gamma_{\omega}$. The second intersection is irredundant.
See~\cite[Theorem 3.5]{paulsen:eiwg}.

This technique allows us to find an irredundant m-ireducible decomposition for any ideal $J$ in $S$ generated by monomials of the form
$X_i^aX_j^a$ with $i\neq j$ and $a\in\mathbb Z_{>0}$. 
Indeed, such an ideal is of the form $I(\Gamma_\omega)$ where $ij$ is an edge of $\Gamma$ if and only if the monomial $X_i^aX_j^a$ is in $J$
for some $a\in\mathbb Z_{>0}$,
and $\omega(ij)$ is the least $a$ such that $X_i^aX_j^a\in J$.
\end{fact}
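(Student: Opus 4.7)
This fact has two parts: the decomposition result (cited from \cite[Theorem 3.5]{paulsen:eiwg}) and the concluding observation identifying any ideal $J\subseteq S$ generated by monomials of the form $X_i^aX_j^a$ (with $i\neq j$, $a\in\mathbb Z_{>0}$) as some $I(\Gamma_\omega)$. My plan is to reprove both in our notation.

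For the easy containment $I(\Gamma_\omega)\subseteq\bigcap_{(W,\delta)}P_{W,\delta}$, I would check generators: for each edge $ij$ and each weighted vertex cover $(W,\delta)$, Definition~\ref{defn130510b} forces, say, $i\in W$ with $\delta(i)\leq\omega(ij)$, so $X_i^{\delta(i)}$ divides $X_i^{\omega(ij)}X_j^{\omega(ij)}$, putting the generator into $P_{W,\delta}$. The containment $\bigcap_{(W,\delta)}P_{W,\delta}\subseteq\bigcap_{(W,\delta)\text{ min}}P_{W,\delta}$ is trivial.

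The reverse containment $\bigcap_{(W,\delta)\text{ min}}P_{W,\delta}\subseteq I(\Gamma_\omega)$ is the heart of the argument, which I would establish by contrapositive. Given a monomial $f=X_1^{a_1}\cdots X_d^{a_d}\notin I(\Gamma_\omega)$, I construct a weighted vertex cover $(W,\delta)$ missing $f$. For every edge $ij$, since $X_i^{\omega(ij)}X_j^{\omega(ij)}$ does not divide $f$, at least one of $a_i<\omega(ij)$ or $a_j<\omega(ij)$ holds; pick such an endpoint $c(ij)$, set $W=\{c(ij)\mid ij\in E\}$, and define $\delta(k)=a_k+1$ for $k\in W$. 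Integrality of exponents then gives $\delta(k)=a_k+1\leq\omega(ij)$ whenever $c(ij)=k$, so $(W,\delta)$ is a weighted vertex cover; since $\delta(k)>a_k$, no $X_k^{\delta(k)}$ divides $f$, whence $f\notin P_{W,\delta}$. Finiteness of the poset of weighted vertex covers (the vertex set is finite and weights are bounded above by $\max_{ij}\omega(ij)$) yields a minimal $(W_0,\delta_0)\leq(W,\delta)$, and the defined ordering gives $P_{W_0,\delta_0}\subseteq P_{W,\delta}$, hence $f\notin P_{W_0,\delta_0}$. For irredundancy of the minimal decomposition, \fref{2.2} implies that a containment $P_{W,\delta}\subseteq P_{W',\delta'}$ between pure-power ideals translates to $W\subseteq W'$ and $\delta(i)\geq\delta'(i)$ for $i\in W$, which is exactly $(W,\delta)\leq(W',\delta')$; minimality of $(W',\delta')$ then forces equality.

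For the closing paragraph, I would verify $J=I(\Gamma_\omega)$ by comparing monomial sets. The set $\{a\in\mathbb Z_{>0}\mid X_i^aX_j^a\in J\}$, being a nonempty subset of $\mathbb Z_{>0}$, attains its minimum $\omega(ij)$, so $X_i^{\omega(ij)}X_j^{\omega(ij)}\in J$; conversely, every generator $X_i^aX_j^a$ of $J$ has $a\geq\omega(ij)$, making it a multiple of $X_i^{\omega(ij)}X_j^{\omega(ij)}$. The main obstacle is the reverse-containment construction---choosing the endpoints $c(ij)$ compatibly so that $\delta(k)=a_k+1$ yields a valid weighted vertex cover, which relies crucially on integrality---while the remainder reduces to standard monomial bookkeeping via \fref{2.2} and \pref{2.5}.
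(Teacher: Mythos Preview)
The paper does not prove this statement: it is recorded as a Fact with a reference to \cite[Theorem~3.5]{paulsen:eiwg}, and the closing paragraph about an arbitrary such $J$ is simply spelled out in the text. There is thus no in-paper argument to compare yours against; you are supplying a self-contained proof where the paper defers to the literature.

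Your argument is correct in substance. The containment $I(\Gamma_\omega)\subseteq\bigcap P_{W,\delta}$, the contrapositive construction of $(W,\delta)$ from a monomial $f\notin I(\Gamma_\omega)$ (using integrality to turn $a_k<\omega(ij)$ into $a_k+1\leq\omega(ij)$), the equivalence $P_{W,\delta}\subseteq P_{W',\delta'}\iff (W,\delta)\leq(W',\delta')$ yielding irredundancy, and the identification of $J$ with $I(\Gamma_\omega)$ are all fine.

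One imprecision deserves mention: you assert that the poset of weighted vertex covers is finite because ``weights are bounded above by $\max_{ij}\omega(ij)$''. Definition~\ref{defn130510b} places no upper bound on $\delta$, and the full poset in fact admits infinite strictly descending chains (fix a redundant vertex and keep increasing its weight). What \emph{is} true is that a vertex $k$ with $\delta(k)>\max_{ij}\omega(ij)$ never serves as the covering endpoint for any edge, so deleting it produces a smaller weighted vertex cover; hence every weighted vertex cover dominates one whose weights are all at most $\max_{ij}\omega(ij)$, and that subposet is finite. A minimal element of this finite subposet is minimal globally, since anything strictly below it would, after deleting over-weight vertices, give a strictly smaller element of the subposet. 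With this small repair your descent to a minimal $(W_0,\delta_0)\leq(W,\delta)$ goes through.
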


\begin{ex}\label{ex130510c}
Continue with the graph $\Gamma$ from Example~\ref{ex130510a}.
Consider the weight function $\omega$ with
$\omega(12)=1$ and $\omega(23)=2$. We represent this graphically
by labeling each edge $ij$ with the value $\omega(ij)$:
$$\xymatrix{1\ar@{-}[r]^-1 &2\ar@{-}[r]^-2&3.}$$
We also represent weighted vertex covers graphically with a box around each vertex in the vertex cover 
and using a superscript for the weight, as follows:
$$\xymatrix{*+[F]{1^1}\ar@{-}[r]^-1 &2\ar@{-}[r]^-2&*+[F]{3^2}.}$$
This weighted graph has three minimal weighted vertex covers, the one represented above, and the next two:
$$\xymatrix{1\ar@{-}[r]^-1 &*+[F]{2^1}\ar@{-}[r]^-2&3\\
*+[F]{1^1}\ar@{-}[r]^-1 &*+[F]{2^2}\ar@{-}[r]^-2&3.}$$
Note that the first two correspond to minimal vertex covers of the unweighted graph $\Gamma$, but the third one does not.
The  irredundant decomposition of $I(\Gamma_{\omega})$ coming from \fref{act130510b} is 
$$I(\Gamma_{\omega})=(X_1,X_2^2)S\cap(X_1,X_3^2)S\cap(X_2)S.$$
One can check this equality using \pref{2.5}, and the irredundancy is straightforward.
\end{ex}

Now we develop a version of this construction for the  ideal $I$ from Assumption~\ref{a130510a}.

\begin{defin}\label{defn130510c}
Let $\Gamma_{\SSS}$ denote the ordered pair $(\Gamma,\SSS)$
where $S$ is from Notation~\ref{notn130512a}.
An \textit{interval vertex cover} of $\Gamma_{\SSS}$ is
a pair $(W,\sigma)$ such that
\begin{enumerate}[(1)]
\item $W$ is a vertex cover of $\Gamma$, and
\item $\sigma\colon W\to \Omega$ is a function such that for each edge $ij\in E$, either
\begin{enumerate}[(a)]
\item $i\in W$ and $\SSS(ij)\subseteq\sigma(i)$, or
\item $j\in W$ and $\SSS(ij)\subseteq\sigma(j)$.
\end{enumerate}
\end{enumerate}
The value $\sigma(i)$ is the ``interval weight'' of the vertex $i$.

Given  ordered pairs 
$(W,\sigma)$ and $(W',\sigma')$
where $W,W'\subseteq V$ are subsets and $\sigma\colon W\to\Omega$ and $\sigma'\colon W'\to\Omega$ are functions,  write
$(W,\sigma)\leq(W',\sigma')$
whenever
\begin{enumerate}[(1)]
\item $W\subseteq W'$, and
\item for all $i\in W$, we have $\sigma(i)\subseteq\sigma'(i)$.
\end{enumerate}
An interval vertex cover of $\Gamma_{\SSS}$ is \textit{minimal} if it is minimal in the set of all interval vertex covers with respect to this ordering.

Given an ordered pair $(W,\sigma)$ 
where $W\subseteq V$ and $\sigma\colon W\to\Omega$, set
$$Q_{W,\sigma}=(X_i^{a}\mid \text{$i\in W$ and $a\in\sigma(i)$})R.$$
\end{defin}

\begin{ex}\label{ex130511a}
Continue with the ideal $I$ from Example~\ref{ex130508d}.
We visualize the associated data from Example~\ref{ex130510a}
similarly to the labeled graph from Example~\ref{ex130510c},  keeping track of the entire interval $\SSS(ij)$
for each edge $ij$:
$$\xymatrix{1\ar@{-}[r]^-{\geq 1} &2\ar@{-}[r]^-{>2}&3.}$$
We also represent interval vertex covers graphically with a box around each vertex in the vertex cover 
and  a superscript for the interval weight, as follows:
$$\xymatrix{(W_1,\sigma_1):&*+[F]{1^{\geq 1}}\ar@{-}[r]^-{\geq 1} &2\ar@{-}[r]^-{>2}&*+[F]{3^{>2}}.}$$
This weighted graph has three minimal interval vertex covers, the one represented above and the next two:
$$\xymatrix{(W_2,\sigma_2):&1\ar@{-}[r]^-{\geq 1} &*+[F]{2^{\geq 1}}\ar@{-}[r]^-{>2}&3\\
(W_3,\sigma_3):&*+[F]{1^{\geq 1}}\ar@{-}[r]^-{\geq 1} &*+[F]{2^{>2}}\ar@{-}[r]^-{>2}&3.}$$
Again, the first two correspond to minimal vertex covers of the unweighted graph $\Gamma$, but the third one does not.
\end{ex}

\begin{fact}\label{fact130510d}
The ideals in $R$ of the form $Q_{W,\sigma}$ are exactly the ideals of the form $J_{\underline\alpha,\underline\epsilon}$,
since they are exactly the ones generated by (intervals of) pure powers of the variables.
Thus, the ideals of the form $Q_{W,\sigma}$ are exactly the m-irreducible ideals of $R$ by \pref{classif1}.
\end{fact}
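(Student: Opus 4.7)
The plan is to exhibit an explicit bijective correspondence between the parametrizations $(W,\sigma)$ and $(\ul\alpha,\ul\varepsilon)$, after which the second sentence follows immediately from \pref{classif1}. The substantive content is the observation that each parametrization specifies, for every index $i$, an interval of exponents whose elements contribute pure-power generators $X_i^r$, and that the collections of intervals allowed under each scheme coincide.

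Starting from $(W,\sigma)$ with $\sigma\colon W\to\Omega$, I would define $(\ul\alpha,\ul\varepsilon)$ coordinate-wise as follows: for $i\in W$ set $(\alpha_i,\varepsilon_i)=(b_i,0)$ when $\sigma(i)=[b_i,\infty)$, and $(\alpha_i,\varepsilon_i)=(a_i,1)$ when $\sigma(i)=(a_i,\infty)$; for $i\notin W$ set $\alpha_i=\infty$ with $\varepsilon_i$ arbitrary. Unpacking ``$r_i\geq_{\varepsilon_i}\alpha_i$'' case by case shows that the pure powers $X_i^{r_i}$ allowed as generators of $J_{\ul\alpha,\ul\varepsilon}$ are precisely those with $r_i\in\sigma(i)$ (plus the ``generator'' $X_i^\infty=0$, which contributes nothing), so $J_{\ul\alpha,\ul\varepsilon}$ and $Q_{W,\sigma}$ have identical sets of monomial generators and hence are equal by \fref{2.2}.

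Conversely, starting from a proper ideal $J_{\ul\alpha,\ul\varepsilon}$, I would take $W=\{i\mid\alpha_i\neq\infty\}$ and set $\sigma(i)=[\alpha_i,\infty)$ if $\varepsilon_i=0$ or $\sigma(i)=(\alpha_i,\infty)$ if $\varepsilon_i=1$. Checking that $\sigma(i)\in\Omega$ reduces to excluding the case $(\alpha_i,\varepsilon_i)=(0,0)$, which would place $X_i^0=1$ in $J_{\ul\alpha,\ul\varepsilon}$ and force $J_{\ul\alpha,\ul\varepsilon}=R$, contrary to properness. With the bijection established, the m-irreducibility statement is immediate from \pref{classif1}, since that result identifies the m-irreducible monomial ideals of $R$ with the ideals $J_{\ul\alpha,\ul\varepsilon}$. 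The main ``obstacle'' is just careful bookkeeping around the two boundary behaviors $\alpha_i=\infty$ and $(\alpha_i,\varepsilon_i)=(0,0)$: the former is harmless because $X_i^\infty=0$ contributes nothing, while the latter is the degenerate case $J_{\ul\alpha,\ul\varepsilon}=R$ and falls outside the scope of the $Q_{W,\sigma}$ notation, so the correspondence should be read as a bijection onto the proper m-irreducible ideals.
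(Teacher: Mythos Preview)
Your proposal is correct and is essentially a careful unpacking of what the paper asserts without proof: the paper records this as a \textit{Fact} with only the one-line justification that both families are ``exactly the ones generated by (intervals of) pure powers of the variables,'' and then invokes \pref{classif1}. Your explicit coordinate-wise dictionary between $(W,\sigma)$ and $(\ul\alpha,\ul\varepsilon)$ is precisely the verification the paper leaves implicit.

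One point worth noting: your edge-case analysis is actually \emph{more} precise than the paper's statement. You correctly observe that $R$ itself arises as $J_{\ul\alpha,\ul\varepsilon}$ (take any $\alpha_i=0$, $\varepsilon_i=0$) and is m-irreducible under \dref{3.7}, yet cannot be written as $Q_{W,\sigma}$ because every interval in $\Omega$ excludes $0$ and hence every generator $X_i^a$ of $Q_{W,\sigma}$ has $a>0$. So the literal statement of the Fact is slightly imprecise; the correspondence is between the $Q_{W,\sigma}$ and the \emph{proper} m-irreducible ideals. This causes no trouble downstream, since in \tref{hm130513a} the ideal $I$ of Assumption~\ref{a130510a} is always proper, so only proper m-irreducible ideals appear in its decompositions.
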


The next lemma is the key to the main result of this section.

\begin{lem}\label{lem130511a}
Consider  ordered pairs 
$(W,\sigma)$ and $(W',\sigma')$
where $W,W'\subseteq V$ are subsets and $\sigma\colon W\to\Omega$ and $\sigma'\colon W'\to\Omega$ are functions.
\begin{enumerate}[\rm(a)]
\item\label{lem130511a1} 
One has $Q_{W,\sigma}\subseteq Q_{W',\sigma'}$
if and only if $(W,\sigma)\leq(W',\sigma')$.
\item\label{lem130511a2} 
One has $I\subseteq Q_{W,\sigma}$  if and only if $(W,\sigma)$ is an interval vertex cover of~$\Gamma_{\SSS}$.
\end{enumerate}
\end{lem}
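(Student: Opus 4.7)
The plan is to treat the two parts separately, each split into its two directions, relying on \fref{2.2}, \fref{act130510a}, and the fact that every element of $\Omega$ is an upward-closed subset of $\mathbb{R}_{>0}$ of the form $(c,\infty)$ or $[c,\infty)$. For part \eqref{lem130511a1}, the backward direction is transparent: if $(W,\sigma)\leq(W',\sigma')$, then every generator $X_i^a$ of $Q_{W,\sigma}$ (with $i\in W$ and $a\in\sigma(i)$) satisfies $i\in W'$ and $a\in\sigma'(i)$, hence is already a generator of $Q_{W',\sigma'}$. For the forward direction, I would fix $i\in W$ and any $a\in\sigma(i)$ (nonempty since intervals in $\Omega$ are nonempty). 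Then $X_i^a\in Q_{W',\sigma'}$, so \fref{2.2} yields a generator $X_j^b$ of $Q_{W',\sigma'}$ dividing $X_i^a$ with $j\in W'$ and $b\in\sigma'(j)$. Since $b>0$ and $X_i^a$ involves only the variable $X_i$ with positive exponent, one must have $j=i$ and $b\leq a$, whence $i\in W'$. Upward-closure of $\sigma'(i)$ then promotes $b\in\sigma'(i)$ to $a\in\sigma'(i)$, so $\sigma(i)\subseteq\sigma'(i)$.

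For part \eqref{lem130511a2}, the backward direction again uses \fref{act130510a}: a monomial generator of $I$ has the form $X_i^aX_j^a$ with $ij\in E$ and $a\in\SSS(ij)$, and the interval-vertex-cover hypothesis supplies (without loss of generality) $i\in W$ with $\SSS(ij)\subseteq\sigma(i)$, so that $X_i^a$ is a generator of $Q_{W,\sigma}$ and hence $X_i^aX_j^a\in Q_{W,\sigma}$.

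The main obstacle is the forward direction of \eqref{lem130511a2}. Assuming $I\subseteq Q_{W,\sigma}$ and fixing an edge $ij\in E$, for every $a\in\SSS(ij)$ the monomial $X_i^aX_j^a\in Q_{W,\sigma}$ is, by \fref{2.2}, a multiple of some generator $X_k^b$ with $k\in W$ and $b\in\sigma(k)$, forcing $k\in\{i,j\}$ and $b\leq a$. The delicate point is that $k$ could in principle depend on $a$, so I would split on $\ttt(ij)$. When $\ttt(ij)=0$, take $a=\sss(ij)\in\SSS(ij)$ directly and observe that in either sub-case $\sigma(k)=[c,\infty)$ or $\sigma(k)=(c,\infty)$ the inequality $b\leq\sss(ij)$ forces $c\leq\sss(ij)$ (strictly in the open case), so $\SSS(ij)=[\sss(ij),\infty)\subseteq\sigma(k)$. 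When $\ttt(ij)=1$, pick a sequence $(a_n)$ in $\SSS(ij)$ with $a_n\to\sss(ij)$; finiteness of $\{i,j\}$ yields (by pigeonhole) a single $k\in W$ appearing infinitely often with witnesses $b_n\in\sigma(k)$ satisfying $b_n\leq a_n\to\sss(ij)$, and letting $n\to\infty$ gives $c\leq\sss(ij)$ in both endpoint-types of $\sigma(k)$, so $\SSS(ij)=(\sss(ij),\infty)\subseteq\sigma(k)$.
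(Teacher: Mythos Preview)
Your proof is correct, and for part~\eqref{lem130511a1} and the backward direction of part~\eqref{lem130511a2} it matches the paper's argument essentially verbatim. The forward direction of part~\eqref{lem130511a2}, however, you handle by a genuinely different route. The paper argues by cases on which of $i,j$ lie in $W$: if (say) $i\notin W$, then every witness $X_k^b$ dividing $X_i^aX_j^a$ is forced to have $k=j$, and upward-closure of $\sigma(j)$ gives $a\in\sigma(j)$; the remaining case $i,j\in W$ with $\SSS(ij)\not\subseteq\sigma(i)$ is disposed of by choosing $a'\in\SSS(ij)\smallsetminus\sigma(i)$, reducing (via upward-closure) to an $a$ outside $\sigma(i)$, and then ruling out $k=i$ by contradiction. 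Your argument instead splits on the shape of $\SSS(ij)$: when $\ttt(ij)=0$ you test at the single point $a=\sss(ij)$, and when $\ttt(ij)=1$ you run a pigeonhole-plus-limit argument along $a_n\searrow\sss(ij)$ to pin down a fixed endpoint $k$. The paper's approach is purely order-theoretic and avoids any sequential reasoning, which keeps it within the discrete/combinatorial spirit of the section; your approach is a bit more analytic but has the virtue of producing the covering vertex $k$ directly from the infimum data $\sss(ij),\ttt(ij)$ rather than via a contradiction.
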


\begin{proof}
\eqref{lem130511a1}
We prove the forward implication; the  reverse implication is  similar and easier.
Assume that $Q_{W,\sigma}\subseteq Q_{W',\sigma'}$.
To show that $(W,\sigma)\leq(W',\sigma')$, let $i\in W$;
we need to show that $i\in W'$ and that $\sigma(i)\subseteq\sigma'(i)$.
Let $a\in\sigma(i)$.
By definition, it follows that $X_i^a\in Q_{W,\sigma}\subseteq Q_{W',\sigma'}$.
\fref{2.3} implies that $X_i^a$ is a multiple of a monomial generator of $Q_{W',\sigma'}$,
so there exist $j\in W'$ and $b\in\sigma'(j)$ such that
$X_i^a\in(X_j^b)R$. Note that $a,b>0$. It follows that
$i=j\in W'$ and $b\leq a$. Since $\sigma'(i)$ is an interval of the form $[c,\infty)$ or $(c,\infty)$,
the conditions $b\in\sigma'(i)$ and $b\leq a$ implies that $a\in\sigma'(i)$, as desired.

\eqref{lem130511a2}
Again, we prove the forward implication.
Assume that $I\subseteq Q_{W,\sigma}$.
To show that $W$ is a vertex cover of $\Gamma$, let $ij\in E$. 
By definition of $E$ and $\SSS(ij)$, there is an element $a\in\SSS(ij)$ such that
$X_i^aX_j^a\in I\subseteq Q_{W,\sigma}$. By \fref{2.3},
the element $X_i^aX_j^a$ is a multiple of a monomial generator of $Q_{W,\sigma}$,
so there exist $k\in W$ and $b\in\sigma(k)$ such that $X_i^aX_j^a\in (X_k^b)R$.
Since $a,b>0$, we conclude that either $i=k\in W$ or $j=k\in W$, so $W$ is a vertex cover of $\Gamma$.

To show that $(W,\sigma)$ is an interval vertex cover of $\Gamma$, let $ij\in E$. 
We proceed by cases.

Case 1: $i\notin W$.
Since $W$ is a vertex cover, we have $j\in W$.
In this case, we need to show that $\SSS(ij)\subseteq\sigma(j)$, so let $a\in\SSS(ij)$.
It follows that $X_i^aX_j^a\in I\subseteq Q_{W,\sigma}$.
Hence, there is a monomial generator $X_k^b\in Q_{W,\sigma}$ such that
$X_i^aX_j^a\in(X_k^b)R$. Since $i\notin W$, the first paragraph of the proof of part~\eqref{lem130511a2}
shows that $j=k$, and we conclude that $b\leq a$. 
As in the proof of part~\eqref{lem130511a1}, we conclude that $a\in\sigma(j)$, as desired.

Case 2: $j\notin W$. This case is handled like Case 1.

Case 3: $i,j\in W$ and $\SSS(ij)\not\subseteq\sigma(i)$.
Again, we need to show that $\SSS(ij)\subseteq\sigma(j)$, so let $a\in\SSS(ij)$.
The condition $\SSS(ij)\not\subseteq\sigma(i)$ implies that there is an element $a'\in\SSS(ij)\smallsetminus\sigma(i)$.
If $a'\leq a$, then it suffices to show that $a'\in\sigma(j)$, as above.
If $a\leq a'$, then the assumption $a'\in\SSS(ij)\smallsetminus\sigma(i)$
implies that $a\in\SSS(ij)\smallsetminus\sigma(i)$ because of
the shape of the interval $\sigma(i)$.
Thus, we may replace $a$ by $a'$ if necessary to assume that $a\in\SSS(ij)\smallsetminus\sigma(i)$.

As above,
there is a monomial generator $X_k^b\in Q_{W,\sigma}$ such that
$X_i^aX_j^a\in(X_k^b)R$.
It follows that either $i=k$ or $j=k$, and $b\leq a$.
If $i=k$, then the condition $X_k^b\in Q_{W,\sigma}$ implies that $b\in\sigma(i)$;
hence the inequality $b\leq a$ implies that $a\in\sigma(i)$ because of the shape of $\sigma(i)$;
this is a contradiction.
Thus, we have $j=k$ and, as in the previous sentence, $a\in\sigma(j)$.
\end{proof}

\begin{thm}\label{thm130513a}
For the ideal $I$ from Assumption~\ref{a130510a},
we have the following m-irreducible decompositions
$$I=\bigcap_{(W,\sigma)}Q_{W,\sigma}=\bigcap_{\text{$(W,\sigma)$ min}}Q_{W,\sigma}$$
where the first intersection is taken over all interval vertex covers of $\Gamma$, and the second
intersection is taken over all minimal interval vertex covers of $\Gamma$. The second intersection is finite and irredundant,
and
the graph $\Gamma$ with data from Notation~\ref{notn130512a}
has only finitely many minimal interval vertex covers.
\end{thm}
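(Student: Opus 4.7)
The plan is to use \lref{em130511a} to identify interval vertex covers of $\Gamma_{\SSS}$ with the m-irreducible monomial ideals of $R$ that contain $I$, and then invoke \pref{4.14} to extract the decompositions. By \fref{act130510d}, every m-irreducible monomial ideal of $R$ has the form $Q_{W,\sigma}$ for some pair $(W,\sigma)$. The two parts of \lref{em130511a} then show that the assignment $(W,\sigma)\mapsto Q_{W,\sigma}$ is an order-isomorphism between the poset of interval vertex covers of $\Gamma_{\SSS}$ (ordered by $\leq$) and the poset $\lola{I}$ (ordered by $\subseteq$); in particular, it restricts to a bijection between the minimal interval vertex covers and the set $\lola{I}'$ of minimal elements of $\lola{I}$.

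Given this correspondence, the two displayed decompositions are simply translations of the equalities $I=\bigcap_{J\in\lola{I}}J=\bigcap_{J\in\lola{I}'}J$ supplied by \pref{4.14}\eqref{p4.14b}. Irredundancy of the second intersection is immediate from \lref{em130511a}\eqref{lem130511a1} together with the definition of $\lola{I}'$: two distinct minimal interval vertex covers are $\leq$-incomparable, so the associated ideals $Q_{W,\sigma}$ are pairwise incomparable under containment.

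The main obstacle is showing that the set of minimal interval vertex covers is finite. For this I would use \fref{act130510a}, which expresses $I$ as a finite sum of ideals of the form $\ideal{I}{\alpha}{\varepsilon}$; by \pref{classif2} this guarantees that $I$ admits a unique irredundant finite m-irreducible decomposition $I=\bigcap_{k=1}^n P_k$. I claim that $\lola{I}'=\{P_1,\ldots,P_n\}$. Fix any $J\in\lola{I}$. By \lref{2.7},
$$J \;=\; J+I \;=\; J+\bigcap_{k=1}^n P_k \;=\; \bigcap_{k=1}^n (J+P_k),$$
a finite intersection of monomial ideals. Since $J$ is m-irreducible, a short induction on $n$ using \dref{3.7} yields $J=J+P_k$ for some $k$, i.e.\ $P_k\subseteq J$. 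Thus every element of $\lola{I}$ contains some $P_k$, and because the $P_k$ are themselves pairwise $\subseteq$-incomparable (by irredundancy of the decomposition), they are precisely the minimal elements of $\lola{I}$. Hence $|\lola{I}'|=n$ is finite, and transferring under the bijection shows that $\Gamma_{\SSS}$ has only finitely many minimal interval vertex covers, which in turn makes the second intersection in the theorem finite.
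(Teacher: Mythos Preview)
Your proof is correct and follows essentially the same strategy as the paper: both use \lref{em130511a} to identify interval vertex covers with m-irreducible ideals containing $I$, and both deduce finiteness from the finite m-irreducible decomposition guaranteed by \fref{act130510a} and \pref{classif2} via the Lemma~\ref{l2.7} splitting argument (which is precisely what the paper means by ``the proof of \pref{classif2} shows\ldots''). The only organizational difference is that you route the decomposition equalities through \pref{4.14}, whereas the paper verifies the two containments $I\subseteq\bigcap Q_{W,\sigma}$ and $\bigcap Q_{W,\sigma}\subseteq I$ directly; this is a mild streamlining but not a substantively different argument.
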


\begin{proof}
In the next display, the  containment is from \lref{em130511a}\eqref{lem130511a2}.
$$I\subseteq\bigcap_{(W,\sigma)}Q_{W,\sigma}=\bigcap_{\text{$(W,\sigma)$ min}}Q_{W,\sigma}$$
For the equality, we have $\bigcap_{(W,\sigma)}Q_{W,\sigma}\subseteq\bigcap_{\text{$(W,\sigma)$ min}}Q_{W,\sigma}$
by basic properties of intersections, and the reverse containment follows from \lref{em130511a}\eqref{lem130511a1}.
Also, the second intersection is irredundant by \lref{em130511a}\eqref{lem130511a1}.

By \pref{classif2} and \fref{act130510a}, the ideal $I$ has a finite m-irreducible decomposition
which is of the form
$I=\cap_{k=1}^mQ_{W_k,\sigma_k}$
by \fref{act130510d}. Note that \lref{em130511a}\eqref{lem130511a2} implies that each pair $(W_k,\sigma_k)$ is an
interval vertex cover of $\Gamma$.
Thus, we have
$$I=\bigcap_{k=1}^mQ_{W_k,\sigma_k}\supseteq \bigcap_{(W,\sigma)}Q_{W,\sigma}.$$
With the previous display, this provides the equalities from the statement of the result.
Thus, it remains to show that the intersection $\bigcap_{\text{$(W,\sigma)$ min}}Q_{W,\sigma}$ is finite.
For this, let $(W,\sigma)$ be a minimal interval vertex cover of $\Gamma$. It suffices to show that 
$(W,\sigma)=(W_k,\sigma_k)$ for some $k$.
From the equalities we have already established, we have
$$Q_{W,\sigma}\supseteq I=\bigcap_{k=1}^mQ_{W_k,\sigma_k}.$$
The proof of \pref{classif2} shows that
$Q_{W,\sigma}\supseteq Q_{W_k,\sigma_k}$
for some $k$. So, \lref{em130511a}\eqref{lem130511a1} implies that $(W,\sigma)\geq(W_k,\sigma_k)$.
As these are both interval vertex covers of $\Gamma$, the minimality of $(W,\sigma)$
yields the desired equality $(W,\sigma)=(W_k,\sigma_k)$.
\end{proof}

\begin{ex}\label{ex130511b}
Continue with the ideal $I$ from Example~\ref{ex130508d}.
The minimal vertex covers from Example~\ref{ex130511b} provide the following irredundant m-irreducible decomposition
$$I=Q_{(W_1,\sigma_1)}\cap Q_{(W_1,\sigma_2)}\cap Q_{(W_3,\sigma_3)}$$
which is exactly the decomposition computed in Example~\ref{ex130508d}.
\end{ex}

\section{Monomial Krull Dimension}\label{smdim}
We now introduce and study a notion of  Krull dimension for this setting. 
The main result of this section is  \cref{or130521a}.

\begin{Assumption}
Throughout this section, set
$R = A[\moncrs]$. 
\end{Assumption}

\begin{defin}\label{dmdim}
 For an m-prime ideal $P=Q_T$, we employ the notation $v(P)$ to denote the number of variables from which $P$ is generated, i.e.,
 \[ v(P) =|T|= |\{ X_i \st X_i^k \in P \text{ for some } k \in \mathbb{M}^i_{> 0} \text{ and } 1 \leq i \leq d \}|\]
and set $\mdimen^*(R/P) =  d - v(P)$.

The \textit{monomial Krull dimension} of an arbitrary monomial ideal $I$ is
 \[\mdim{R/I} = \sup\{\mdimen^*(R/Q) \st Q \text{ is m-prime and } Q \supseteq I\}.\]
\end{defin}

\begin{fact}\label{fsubdim}
By definition, if $I$ and $J$ are monomial ideals in $R$ such that $I \supseteq J,$ then $\mdim{R/I} \leq \mdim{R/J}$, and
furthermore we have the following.
 \[\mdim{R/I} = \begin{cases}
 \max\{\mdimen^*(R/Q) \st Q \text{ is m-prime and } Q \supseteq I\} &\text{if $I\neq R$}\\
 -\infty&\text{if $I= R$}\end{cases}
 \]
Also, given an m-prime ideal $P$ of $R$, it is straightforward to show that $\mdim{R/P}=\mdimen^*(R/P)$.
\end{fact}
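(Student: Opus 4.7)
The plan is to reduce everything to the finite parametrization of m-prime ideals given by \pref{ppure}: every m-prime of $R$ has the form $Q_T$ for a unique subset $T\subseteq\{1,\ldots,d\}$, so there are only $2^d$ m-prime ideals. In particular, the set of values $\mdimen^*(R/Q)$ with $Q$ an m-prime containing $I$ is a finite (possibly empty) set of integers, so its supremum is a maximum whenever it is nonempty. This handles both the ``sup is a max'' portion of the middle assertion and the finiteness behind everything else.

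The monotonicity claim $\mdim{R/I}\leq\mdim{R/J}$ is then immediate from the definition: when $I\supseteq J$, every m-prime containing $I$ also contains $J$, so the supremum over the smaller collection cannot exceed the supremum over the larger one. For the displayed formula, the remaining point is to distinguish the cases $I=R$ and $I\neq R$. If $I=R$, there is no m-prime $Q\supseteq I$ (since m-primes are proper by definition), so the supremum is taken over the empty set and equals $-\infty$. If $I\neq R$, I would show that $I\subseteq Q_{\{1,\ldots,d\}}$, making the relevant set nonempty. This follows from \fref{2.2}\eqref{item130521c} once we observe that every monomial $\mono{X}{r}\neq 1$ satisfies $r_i>0$ for some $i$, hence lies in $(X_i^{r_i})R\subseteq Q_{\{1,\ldots,d\}}$.

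For the final assertion, let $P=Q_T$ be m-prime. By the displayed formula already established, $\mdim{R/P}$ is the maximum of $\mdimen^*(R/Q_{T'})=d-|T'|$ over subsets $T'$ with $Q_{T'}\supseteq Q_T$. The key step is to verify that $Q_T\subseteq Q_{T'}$ if and only if $T\subseteq T'$. The reverse direction is immediate from the generating sets. For the forward direction, if $Q_T\subseteq Q_{T'}$, then for each $i\in T$ and $n\in\mathbb{M}^i_{>0}$, \fref{2.2}\eqref{item130521c} forces $X_i^n$ to be a multiple of some generator $X_j^m$ of $Q_{T'}$ with $j\in T'$, and \fref{2.2}\eqref{item130521b} then forces $i=j\in T'$. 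Once this is in hand, $|T'|\geq|T|$, so $d-|T'|\leq d-|T|$, with equality precisely at $T'=T$. Thus the maximum equals $d-|T|=\mdimen^*(R/P)$, as desired.

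No step here is a real obstacle; the only mild subtlety is the ``empty set'' edge case for $I=R$, and even that is forced immediately by the definition of m-prime as a proper ideal. Everything else is bookkeeping on top of \pref{ppure} and \fref{2.2}.
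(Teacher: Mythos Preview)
Your argument is correct. The paper does not supply an explicit proof of this Fact, treating it as an immediate consequence of the definitions; your write-up spells out precisely the details the paper leaves implicit---the finiteness of the set of m-primes via \pref{pure}, the empty-set convention yielding $-\infty$ when $I=R$, the nonemptiness via $I\subseteq Q_{\{1,\ldots,d\}}$ when $I\neq R$, and the containment characterization $Q_T\subseteq Q_{T'}\iff T\subseteq T'$---so your approach is essentially the intended one.
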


\begin{ex}\label{ex130509a}
Let $I$ be a monomial ideal in the ring $R$.
It is straightforward to show that $\mdim{R/I}=d$ if and only if $I=0$.
Also, if $I\neq R$, then $\mdim{R/I}=0$ if and only if for $i=1,\ldots,d$ there is 
an element $a_i\in\mathbb M^i_{>0}$ such that $X_i^{a_i}\in I$.

In the case $d=2$, this tells us that the ideals from Examples~\ref{ex130508a}
and~\ref{ex130508c} have $\mdim{R/I}=1$, and the ideals from Examples~\ref{ex130508g} and~\ref{ex130508i} have $\mdim{R/I}=0$.
\end{ex}

Before proving  our main results, we verify some desired properties m-dim.

\begin{prop}\label{pdimchain}
 For a monomial ideal $I$ in $R$, we have
 \[ \mdim{R/I} = \sup\{n \geq 0 \; | \; \exists \text{ m-prime } P_0 \subset P_1 \subset \cdots \subset P_n \text{ and } I \subseteq P_0 \}. \]
 \begin{proof}
Assume without loss of generality that $I\neq R$.

  Let $m=\max\{n \geq 0 \; | \; \exists \text{ m-prime } P_0 \subset P_1 \subset \cdots \subset P_n \text{ and } I \subseteq P_0 \}$ with corresponding maximal chain $I \subseteq P_0 \subset P_1 \subset \cdots \subset P_m$. 
  The maximality of this chain implies that $P_m=Q_{\{1,\ldots,d\}}$.
  Then for $i=0,\ldots,m-1$ we have $v(P_{i+1})=v(P_i)+1$; otherwise, we could find an m-prime $P$ such that $P_i \subset P \subset P_{i+1},$ which would contradict the maximality of $m$. Hence, we have $d=v(P_{m})=v(P_0)+m$, and it follows that $m = \mdim{R/P_0}$. It remains to show  that $\mdim{R/P_0} = \mdim{R/I}$.
  
  We have that $\mdim{R/P_0} \leq \mdim{R/I}$ by \fref{subdim}. Now, suppose that $\mdim{R/P_0}<\mdim{R/I}$. That would mean there is an m-prime ideal $P \supseteq I$ 
  such that $v(P)<v(P_0)$. We could then create a chain $P \subset P_0' \subset \cdots \subset P_m'$,
  contradicting the maximality of $m$. 
 \end{proof}
\end{prop}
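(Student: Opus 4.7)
My plan is to exploit the classification of m-prime ideals from \pref{pure}, which tells us that every m-prime ideal of $R$ has the form $Q_T$ for a unique subset $T \subseteq \{1,\ldots,d\}$, together with the easy observation (immediate from the definition of $Q_T$ and \fref{2.3}) that $Q_T \subseteq Q_{T'}$ if and only if $T \subseteq T'$. Under this correspondence, $v(Q_T) = |T|$ and chains of m-prime ideals in $R$ correspond bijectively to chains of subsets of $\{1,\ldots,d\}$ ordered by inclusion.

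I would first dispose of the case $I = R$, where both sides agree (no m-prime $P_0$ contains $I$, so both the supremum and $\mdim{R/I}$ are $-\infty$ by convention/\fref{subdim}). So assume $I \neq R$, so that $\lola I$ contains at least one m-prime (for instance $Q_{\{1,\ldots,d\}}$, the unique maximal m-prime).

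Next, I would establish the core length formula: for any m-prime ideal $P_0 = Q_{T_0}$, the longest chain of m-primes $P_0 \subset P_1 \subset \cdots \subset P_n$ starting at $P_0$ has length exactly $d - |T_0| = \mdimen^*(R/P_0)$. The upper bound is forced because a strict chain of subsets of $\{1,\ldots,d\}$ starting at $T_0$ has at most $d - |T_0|$ steps (each inclusion must gain at least one element). The lower bound is achieved by inserting the elements of $\{1,\ldots,d\} \setminus T_0$ one at a time, producing a chain of length $d - |T_0|$.

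Finally I would take a supremum: because $I \subseteq P_0$ can be chosen freely over all m-primes in $\lola I$, and because the maximal chain length starting at $P_0$ equals $\mdimen^*(R/P_0)$, we get
\[ \sup\{n \geq 0 \mid \exists \text{ m-prime } P_0 \subset \cdots \subset P_n, \ I \subseteq P_0\} = \sup\{\mdimen^*(R/P_0) \mid P_0 \text{ m-prime}, P_0 \supseteq I\} = \mdim{R/I}, \]
where the second equality is the definition of monomial Krull dimension. No step is really an obstacle here; the whole argument is a translation through the bijection between m-primes and subsets of the variable index set, and the only mild subtlety is making sure the $I = R$ edge case and the fact that the supremum in \dref{dmdim} is attained (per \fref{subdim}) are handled so that suprema and maxima agree.
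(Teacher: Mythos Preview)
Your argument is correct and follows essentially the same approach as the paper's proof: both rely on the bijection $T \leftrightarrow Q_T$ between subsets of $\{1,\ldots,d\}$ and m-primes to reduce chain lengths to subset counting. Your organization is slightly more direct---you compute the maximal chain length $d-|T_0|$ for each fixed $P_0=Q_{T_0}$ and then take the supremum over $P_0\supseteq I$, whereas the paper first fixes a single maximal chain and then argues separately that its bottom $P_0$ realizes $\mdim{R/I}$---but the mathematical content is the same.
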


The next result applies whenever $I$ is finitely generated by \cref{or130513a}.

\begin{prop}
Given a monomial ideal $I$ in $R$ with a finite m-irreducible decomposition $I = \bigcap_{i=1}^t J_i$, one has
 \[ \mdim{R/I} = \sup_{i}\{\mdim{R/J_i}\}.\]
 \begin{proof}
Assume without loss of generality that $t\geq 1$, i.e., that $I\neq R$.
Since $J_i \supseteq I$ for all $i$,  \fref{subdim} 
implies that $\sup_{i}\{\mdim{R/J_i}\}\leq \mdim{R/I}$.

  We now claim that for every m-prime ideal $P,$ if $P \supseteq I,$ then there is an index $k$ such that $P \supseteq J_k$. By way of contradiction, suppose that for all $k$ there is a monomial $f_k \in \monset{J_k}\smallsetminus\monset{P}$. Since $P$  is m-prime and $f_k \notin \monset{P}$ for all $k,$ we have that  $\prod_{k=1}^t f_k \notin \monset{P}$. However, $f_k \in J_k$ implies that $\prod_{k=1}^tf_k \in \bigcap_{k=1}^tJ_k = I \subseteq P$, a contradiction. 

Now, let $P$ be  m-prime such that $I\subseteq P$ and $\mdim{R/P}=\mdim{R/I}$. The claim implies that there is an index $k$ such that $P \supseteq J_k$,
so we have 
$$\mdim{R/I}=\mdim{R/P}\leq\mdim{R/J_k}\leq\sup_{i}\{\mdim{R/J_i}\}$$
  as desired. 
 \end{proof}
\end{prop}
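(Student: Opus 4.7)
The plan is to prove both inequalities separately, with the forward direction being immediate from monotonicity and the reverse requiring an analogue of prime avoidance for m-prime ideals.

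For the easy direction $\sup_i\{\mdim{R/J_i}\} \leq \mdim{R/I}$, I would simply invoke \fref{subdim}: since each $J_i$ contains $I$, we have $\mdim{R/J_i} \leq \mdim{R/I}$ for every $i$, and taking supremum gives the desired bound. The case $I = R$ can be handled separately: here $t = 0$, the empty intersection equals $R$, and both sides equal $-\infty$ by convention, so I would assume $I \neq R$ from here on.

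The reverse inequality rests on the following claim: if $P$ is any m-prime ideal with $P \supseteq I$, then $P \supseteq J_k$ for some $k$. I would prove this by contradiction. If $P \not\supseteq J_k$ for every $k$, then by \fref{2.2}\eqref{item130521a} there exist monomials $f_k \in \monset{J_k} \smallsetminus \monset{P}$ for each $k=1,\ldots,t$. Since $P$ is m-prime and no $f_k$ lies in $P$, an easy induction on $t$ using \dref{mprime} shows that the product $\prod_{k=1}^t f_k$ is a monomial not in $P$. On the other hand, for each $k$ we have $f_k \in J_k$, so $\prod_{k=1}^t f_k \in \bigcap_{k=1}^t J_k = I \subseteq P$, a contradiction.

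With the claim in hand, I would finish as follows. By \fref{subdim} applied to the nonzero proper ideal $I$, there is an m-prime $P$ with $P \supseteq I$ and $\mdimen^*(R/P) = \mdim{R/I}$. The claim produces an index $k$ with $P \supseteq J_k$, and then \fref{subdim} gives
\[
\mdim{R/I} = \mdimen^*(R/P) = \mdim{R/P} \leq \mdim{R/J_k} \leq \sup_i\{\mdim{R/J_i}\},
\]
completing the proof. The only real obstacle is the m-prime avoidance claim, but \dref{mprime} is formulated precisely on monomials and \fref{2.2}\eqref{item130521a} lets us detect containments of monomial ideals via their monomial sets, so the classical prime-avoidance trick transfers to our setting without difficulty.
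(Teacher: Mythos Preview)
Your proposal is correct and follows essentially the same approach as the paper's proof: both directions are handled identically, with the easy inequality coming from monotonicity (\fref{subdim}) and the reverse inequality from the same m-prime avoidance claim proved by the same contradiction using the product $\prod_{k=1}^t f_k$. Your added citations to \fref{2.2}\eqref{item130521a} and the explicit mention of induction for the product are minor elaborations, not substantive differences.
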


\begin{ex}
Consider the case $\mathbb{M}^i=\mathbb R$ for all $i$
and the ideal $I$ from Assumption~\ref{a130510a}.
Using ideas from~\cite{paulsen:eiwg}, one shows that $\mdim{R/I}=d-\tau(\Gamma)$
where $\tau(\Gamma)$ is the \textit{vertex cover number} of $\Gamma$
$$\tau(\Gamma)=\min\{|W|\mid\text{$W$ is a vertex cover of $\Gamma$}\}.$$
\end{ex}

To consider the semicontinuity of monomial Krull dimension, we introduce the next definition.

\begin{defin}\label{def130521a}
 Let $\varepsilon \in \mathbb{R}_{> 0}$. For monomial ideals $I,J$ in $R$, we say that dist($I,J$) $< \varepsilon$ if 
 \begin{enumerate}[(1)]
\item\label{def130521a1} for all $\underline{X}^{\underline{\gamma}} \in \monset{I},$ there exists $\underline{X}^{\underline{\delta}} \in \monset{J}$ such that dist($\underline{\gamma},\underline{\delta}$) $< \varepsilon,$ and 
\item\label{def130521a2} for all $\underline{X}^{\underline{\delta}'} \in \monset{J},$ there exists $\underline{X}^{\underline{\gamma}'} \in \monset{I}$ with $\dist{\underline{\delta}'}{\underline{\gamma}'} < \varepsilon$. 
\end{enumerate}
Here dist($\underline{\gamma},\underline{\delta}$) = $|\underline{\gamma}-\underline{\delta}| = \sqrt{(\gamma_1-\delta_1)^2+\cdots+(\gamma_d-\delta_d)^2}$.
\end{defin}

\begin{defin}\label{def130521b}
  The distance between two monomial ideals $I$, $J$ in $R$ is \[\dist{I}{J} = \inf\{\varepsilon > 0 \st \dist{I}{J} < \varepsilon \}.\]
\end{defin}

\begin{ex}\label{ex130508f}
Let $I$ be a  monomial ideal in  $R$.
Since $\monset 0=\emptyset$, we have
$$\dist I0=\begin{cases}
0 & \text{if $I=0$} \\
\infty &\text{if $I\neq 0$}\end{cases}
$$
\end{ex}

\begin{ex}
\label{ex130509c}
The ideals $J_{2,1,0}=(X_2)R$ and $J_{1,1,0}=(X_1)R$
in $R=A[\mathbb{R}_{\geq 0}\times\mathbb{R}_{\geq 0}]$
satisfy $\dist{(X_2)R}{(X_1)R}=1$.

\

\begin{center}
\begin{tikzpicture}
	\draw (1.5,-0.2) node[below,scale=.75]{$1$} -- (1.5,0.2);
	\draw (3,-0.2) node[below,scale=.75]{$2$} -- (3,0.2);
	\draw (-0.2,1.5) node[left,scale=.75]{$1$} -- (0.2,1.5);
	\draw (-0.2,3) node[left,scale=.75]{$2$} -- (0.2,3);
	\draw[fill,color=black!50] (0,1.5) -- (3.2,1.5) -- (3.2,3.2) -- (0,3.2) -- cycle;
	\draw[->] (-0.2,0) -- (3.4,0);
	\draw[->] (0,-0.2) -- (0,3.4);
	\draw (0,1.5) -- (3.2,1.5);
\end{tikzpicture}
\qquad
\qquad
\qquad
\begin{tikzpicture}
	\draw (1.5,-0.2) node[below,scale=.75]{$1$} -- (1.5,0.2);
	\draw (3,-0.2) node[below,scale=.75]{$2$} -- (3,0.2);
	\draw (-0.2,1.5) node[left,scale=.75]{$1$} -- (0.2,1.5);
	\draw (-0.2,3) node[left,scale=.75]{$2$} -- (0.2,3);
	\draw[fill,color=black!50] (1.5,0) -- (3.2,0) -- (3.2,3.2) -- (1.5,3.2) -- cycle;
	\draw[->] (-0.2,0) -- (3.4,0);
	\draw[->] (0,-0.2) -- (0,3.4);
	\draw (1.5,0) -- (1.5,3.2);
\end{tikzpicture}
\end{center}
This is intuitively clear from the above diagrams.
To verify this rigorously, first note that, for every monomial $X_1^aX_2^b\in(X_2)R$ one has $X_1^{a+1}X_2^b\in(X_1)R$ and $\dist{X_1^aX_2^b}{X_1^{a+1}X_2^b}=1$.
Similarly, every monomial in $(X_1)R$ is distance 1 from a monomial in $(X_2)R$. This implies that $\dist{(X_2)R}{(X_1)R}<1+\epsilon$ for all $\epsilon>0$,
so $\dist{(X_2)R}{(X_1)R}\leq1$. Finally, note that the  monomial in $(X_2)R$ that is closest to $X_1\in(X_1)R$ is $X_1X_2$, which is distance 1 from $X_1$,
so $\dist{(X_2)R}{(X_1)R}\geq1$.
\end{ex}

\begin{ex}\label{ex130509b}
The ideals $I_{(1,1),(0,0)}$ and $I_{(1,1),(1,1)}$ 
in $R=A[\mathbb{R}_{\geq 0}\times\mathbb{R}_{\geq 0}]$

\

\begin{center}
\begin{tikzpicture}
	\draw[->] (-0.2,0) -- (3.4,0);
	\draw[->] (0,-0.2) -- (0,3.4);
	\draw (1.5,-0.2) node[below,scale=.75]{$1$} -- (1.5,0.2);
	\draw (3,-0.2) node[below,scale=.75]{$2$} -- (3,0.2);
	\draw (-0.2,1.5) node[left,scale=.75]{$1$} -- (0.2,1.5);
	\draw (-0.2,3) node[left,scale=.75]{$2$} -- (0.2,3);
	\draw[fill,color=black!50] (1.5,1.5) -- (3.2,1.5) -- (3.2,3.2) -- (1.5,3.2) -- cycle;
	\draw (1.5,1.5) -- (1.5,3.2);
	\draw (1.5,1.5) -- (3.2,1.5);
	\draw (1.5,1.5)[fill,color=black] circle (2pt);
\end{tikzpicture}
\qquad
\qquad
\qquad
\begin{tikzpicture}
	\draw[->] (-0.2,0) -- (3.4,0);
	\draw[->] (0,-0.2) -- (0,3.4);
	\draw (1.5,-0.2) node[below,scale=.75]{$1$} -- (1.5,0.2);
	\draw (3,-0.2) node[below,scale=.75]{$2$} -- (3,0.2);
	\draw (-0.2,1.5) node[left,scale=.75]{$1$} -- (0.2,1.5);
	\draw (-0.2,3) node[left,scale=.75]{$2$} -- (0.2,3);
	\draw[fill,color=black!50] (1.5,1.5) -- (3.2,1.5) -- (3.2,3.2) -- (1.5,3.2) -- cycle;
	\draw[color=white] (1.5,1.5) -- (1.5,3.2);
	\draw[dashed] (1.5,1.5) -- (1.5,3.2);
	\draw[color=white] (1.5,1.5) -- (3.2,1.5);
	\draw[dashed] (1.5,1.5) -- (3.2,1.5);
	\draw (1.5,1.5)[fill,color=white] circle (2pt);
	\draw (1.5,1.5) circle (2pt);
\end{tikzpicture}
\end{center}

\

\noindent have 
$\dist{I_{(1,1),(0,0)}}{I_{(1,1),(1,1)}}=0$
even though $I_{(1,1),(0,0)}\neq I_{(1,1),(1,1)}$.
This explains (at least partially) why we restrict to the set of finitely generated monomial ideals in our next result.
\end{ex}

\begin{thm}\label{tmetric}
 The function $\operatorname{dist}$ is a metric on the set of non-zero finitely generated monomial ideals of $R$.
\begin{proof}
Let $I,J,K$ be non-zero finitely generated monomial ideals of $R$.

To see that $\dist{I}{J} \in\mathbb R_{\geq 0}$, let $\ul X^{\ul\alpha}\in\monset I$ and $\ul X^{\ul\beta}\in\monset J$.
Let $\delta>0$ be given. We claim that
$\dist IJ<\max\{|\ul\alpha|,|\ul\beta|\}+\delta$.
\eqref{def130521a1} For every $\ul X^{\ul\gamma}\in\monset I$ there is a monomial
$\ul X^{\ul\gamma+\ul\beta}=\ul X^{\ul\gamma}\ul X^{\ul\beta}\in\monset{J}$
such that $\dist{\ul\gamma+\ul\beta}{\ul\gamma}=|\ul\beta|<\max\{|\ul\alpha|,|\ul\beta|\}+\delta$.
\eqref{def130521a2} For every $\ul X^{\ul\gamma}\in\monset J$ there is a monomial
$\ul X^{\ul\gamma+\ul\alpha}=\ul X^{\ul\gamma}\ul X^{\ul\alpha}\in\monset{I}$
such that $\dist{\ul\gamma+\ul\alpha}{\ul\gamma}=|\ul\alpha|<\max\{|\ul\alpha|,|\ul\beta|\}+\delta$.
This established the claim. 
Consequently, it follows that $\dist IJ\in\mathbb R$ such that $0\leq\dist IJ\leq\max\{|\ul\alpha|,|\ul\beta|\}$, as desired.

The condition $\dist{I}{J} = \dist{J}{I}$ follows 
from the symmetry of  Definition~\ref{def130521a}.
The equality $\dist{I}{I} = 0$ is similarly straightforward.

Next, assume that $\dist{I}{J} = 0$. 
We show that $I=J$.
Let $\ul{X}^{\ul{\alpha}} \in \monset I$ and let $J=(\ul{X}^{\ul{\beta}_1}, \ldots, \ul{X}^{\ul{\beta}_n})R$. For each $i,j$ we set
$\beta'_{i,j}=\max\{\beta_{i,j},\alpha_j\}$. Then $\beta'_{i,j} \geq \alpha_j$ and $\beta'_{i,j} \geq \beta_{i,j}$. Therefore, for all $j,$ 
we have $\ul{X}^{\ul{\beta}'_j} \in (\mono{X}{\alpha} )R \subseteq I$ and $\ul{X}^{\ul{\beta}'_j} \in (\ul{X}^{\ul{\beta}_j})R \subseteq J$.
Since $\dist{I}{J} = 0,$ we have that $\inf\{\dist{\ul{X}^{\ul{\alpha}}}{\ul{X}^{\ul{\gamma}}} \;|\; \ul{X}^{\ul{\gamma}} \in J\} = 0$. 

We claim that
$$\inf\{\dist{\mono{X}{\alpha}}{\mono{X}{\gamma}} \;|\; \ul{X}^{\ul{\gamma}} \in \monset J\} = \min\{\dist{\mono{X}{\alpha}}{\ul{X}^{\ul{\beta}'_i}} \st 1 \leq i \leq n \}.$$
The condition $\ul{X}^{\ul{\beta}'_j}\in \monset J$ explains the inequality $\leq$. 
For the reverse inequality, let $\ul{X}^{\ul{\gamma}} \in J$. Then $\mono{X}{\gamma} = \mon{X}{\beta}{j}\mono{X}{\delta}$ for some $j$ and some $\mono{X}{\delta} \in \monset R$. 
We have that $\dist{\mono{X}{\alpha}}{\mono{X}{\gamma}} = \sqrt{(\gamma_1-\alpha_1)^2 + \cdots + (\gamma_n - \alpha_n)^2}$. 
If $\gamma_i \leq \alpha_i,$ then we have
$\beta_{j,i} \leq \gamma_i \leq \alpha_i$, so  $\beta_{j,i}' = \alpha_i$, which implies that $| \beta_{j,i}' - \alpha_i | = 0 \leq | \gamma_i - \alpha_i |$. 
If $\gamma_i \geq \alpha_i,$ then the condition $\gamma_i \geq \beta_{j,i}$ implies that
$\gamma_i \geq \beta_{j,i}' \geq \alpha_j,$ so we have that $0 \leq \beta_{j,i}' - \alpha_i \leq \gamma_i - \alpha_i$. Therefore, for all $\mono{X}{\gamma} \in J,$ we have
$\dist{\mono{X}{\gamma}}{\mono{X}{\alpha}} \geq \dist{\ul{X}^{\ul{\beta}_j'}}{\mono{X}{\alpha}}$ for some $j$. This proves the claim.

It follows that $\min\{\dist{\mono{X}{\alpha}}{\ul{X}^{\ul{\beta}'_i}} \st 1 \leq i \leq n \} = 0$. Thus, there exists an index $i$
such that $\mono{X}{\alpha}=\ul{X}^{\ul{\beta}'_i}\in J$. As this is so for all $\ul{X}^{\ul{\alpha}} \in \monset I$, we conclude that $I \subseteq J$. By symmetry,
we have $I = J$, as desired.
  
Now, we check the triangle inequality: $\dist{I}{K} \leq \dist{I}{J} + \dist{J}{K}$. 
Let $\varepsilon > 0$ be given; we  show that $\dist{I}{K} < \dist{I}{J} + \dist{J}{K}+\varepsilon$.
Set $d=\dist{I}{J}$ and $e=\dist{J}{K}$,
and let $\mono{X}{\alpha} \in \monset{I}$. We need to find a monomial $\mono{X}{\gamma} \in \monset{K}$ such that 
$\dist\alpha\gamma<d+e+\varepsilon$.
Since $\dist{I}{J}  < d + \frac{\varepsilon}{2}$, there is a monomial $\mono{X}{\beta} \in \monset{J}$ such that $\dist\alpha\beta<d + \frac{\varepsilon}{2}$.
Similarly, there is a monomial $\mono{X}{\gamma} \in \monset{K}$ such that $\dist\beta\gamma<e + \frac{\varepsilon}{2}$.
From the triangle equality in $\mathbb R^d$, we conclude that
   \begin{align*}
\dist\alpha\gamma\leq\dist\alpha\beta+\dist\beta\gamma<d+e+\varepsilon
    \end{align*}
as desired.
\end{proof}
\end{thm}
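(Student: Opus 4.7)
The plan is to verify the four metric axioms — finiteness/non-negativity, symmetry, non-degeneracy, and the triangle inequality — for non-zero finitely generated monomial ideals $I,J,K$ of $R$. Symmetry of $\operatorname{dist}$ follows immediately from the symmetric formulation of \dref{ef130521a}, and $\dist{I}{I}=0$ is equally direct since every monomial is at distance $0$ from itself. The remaining three axioms I would tackle as follows.

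For finiteness, I would fix any $\mono{X}{\alpha}\in\monset{I}$ and $\mono{X}{\beta}\in\monset{J}$, which exist because both ideals are non-zero. Given any $\mono{X}{\gamma}\in\monset{I}$, the monomial $\mono{X}{\gamma}\mono{X}{\beta}=\mono{X}{\gamma+\beta}$ lies in $\monset{J}$ at Euclidean distance exactly $|\underline{\beta}|$ from $\mono{X}{\gamma}$, and symmetrically every monomial of $J$ is within $|\underline{\alpha}|$ of some monomial of $I$. Hence $\dist{I}{J}\leq\max\{|\underline{\alpha}|,|\underline{\beta}|\}<\infty$.

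The hardest step is non-degeneracy: $\dist{I}{J}=0\Rightarrow I=J$. My strategy leverages finite generation in an essential way. Writing $J=(\mon{X}{\beta}{1},\ldots,\mon{X}{\beta}{n})R$ and fixing $\mono{X}{\alpha}\in\monset{I}$, I would define $\underline{\beta}_j'$ componentwise by $\beta_{j,i}'=\max\{\alpha_i,\beta_{j,i}\}$. The key claim is that the infimum of $\dist{\underline{\alpha}}{\underline{\gamma}}$ over $\mono{X}{\gamma}\in\monset{J}$ coincides with the minimum of the finite set $\{|\underline{\beta}_j'-\underline{\alpha}|\}_{j=1}^n$: any $\mono{X}{\gamma}\in\monset{J}$ is divisible by some $\mon{X}{\beta}{j}$ by \fref{2.3}\eqref{item130521c}, and coordinate-wise one checks $|\gamma_i-\alpha_i|\geq|\beta_{j,i}'-\alpha_i|$ by splitting into the cases $\gamma_i\leq\alpha_i$ (where $\beta_{j,i}\leq\gamma_i\leq\alpha_i$ forces $\beta_{j,i}'=\alpha_i$) and $\gamma_i\geq\alpha_i$ (where $\gamma_i\geq\beta_{j,i}'\geq\alpha_i$). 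The hypothesis $\dist{I}{J}=0$ then forces $\underline{\beta}_j'=\underline{\alpha}$ for some $j$, whence $\underline{\alpha}\geq\underline{\beta}_j$ componentwise and $\mono{X}{\alpha}\in(\mon{X}{\beta}{j})R\subseteq J$. Thus $I\subseteq J$, and symmetry closes the case.

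For the triangle inequality $\dist{I}{K}\leq\dist{I}{J}+\dist{J}{K}$, I would fix $\varepsilon>0$ and chase monomials: for any $\mono{X}{\alpha}\in\monset{I}$, pick $\mono{X}{\beta}\in\monset{J}$ with $\dist{\underline{\alpha}}{\underline{\beta}}<\dist{I}{J}+\varepsilon/2$, then $\mono{X}{\gamma}\in\monset{K}$ with $\dist{\underline{\beta}}{\underline{\gamma}}<\dist{J}{K}+\varepsilon/2$, and apply the ordinary triangle inequality in $\mathbb{R}^d$. The analogous chase starting from a monomial of $K$ verifies condition~\eqref{def130521a2} of \dref{ef130521a}, and taking $\varepsilon\downarrow0$ yields the result. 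The main technical obstacle is the reduction of infimum to minimum in the non-degeneracy step; finite generation of $J$ is indispensable, since Example~\ref{ex130509b} already exhibits distinct (non-finitely-generated) ideals at distance $0$.
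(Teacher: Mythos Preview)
Your proposal is correct and follows essentially the same approach as the paper's own proof: the finiteness bound via multiplication by a fixed monomial, the non-degeneracy argument via the auxiliary exponents $\beta'_{j,i}=\max\{\alpha_i,\beta_{j,i}\}$ and the reduction of the infimum to a finite minimum by the same coordinate-wise case split, and the $\varepsilon/2$ chase through $J$ for the triangle inequality all match the paper exactly. Your explicit mention of verifying condition~\eqref{def130521a2} in the triangle-inequality step is a small clarification the paper leaves implicit, but otherwise the arguments coincide.
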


 \begin{thm}\label{tsemi-cont}
Given a non-zero finitely generated monomial ideal $I$ in $R$, there exists $\varepsilon > 0$ such that for all non-zero monomial ideals $J,$ with $\dist{I}{J} < \varepsilon,$ we have $\mdim{R/J} \geq \mdim{R/I}$.
  \begin{proof}
  Since $\mdim{R/R}=-\infty$, we assume without loss of generality that $I\neq R$.
    Let $I=(\ul{X}^{\ul{\alpha}_1},\ldots, \ul{X}^{\ul{\alpha}_n})R$, and set $\varepsilon = \min\{\alpha_{ij} \st \alpha_{ij} > 0\} > 0$. Let $J$ be a non-zero monomial ideal with $\dist{I}{J} < \varepsilon$. 
    
  Claim: For every m-prime ideal $Q$ of $R$ such that $Q \supseteq I,$ we have $Q \supseteq J$.
    Let $\mono{X}{\gamma} \in \monset J$ and choose $\mono{X}{\delta} \in I$ such that $\dist{\ul\gamma}{\ul\delta} < \varepsilon$. There exists 
    an index $i$ such that $\mono{X}{\delta}\in(\mon{X}{\alpha}{i})R\subseteq I\subseteq Q$. Since $Q$ is m-prime, there exists 
    an index $j$ such that $\alpha_{i,j} > 0$ and $X_j^{\alpha_{i,j}} \in Q$. Hence, $X_j^t \in Q$ for all $t > 0$. Note that $0 < \varepsilon \leq \alpha_{i,j} \leq \delta_j$
     and $|\gamma_j - \delta_j| \leq\dist{\ul{\gamma}}{\ul{\delta}}<\varepsilon$. Therefore, we have
$- \varepsilon < \gamma_j - \delta_j < \varepsilon$, which implies that
$0 \leq \delta_j - \varepsilon < \gamma_j$, so we conclude that
$\mono{X}{\gamma} \in Q$, which establishes the claim.

It follows $\{ P \supseteq I \st P \text{ is m-prime}\} \subseteq \{ P' \supseteq J \st P' \text{ is m-prime}\}$, and hence $\mdim{R/I} \leq \mdim{R/J}$.
  \end{proof} 
 \end{thm}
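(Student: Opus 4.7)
The plan is to use the characterization $\mdim{R/I} = \sup\{\mdimen^*(R/Q) \mid Q \supseteq I \text{ is m-prime}\}$ from \fref{subdim} and show that, for sufficiently small $\varepsilon$, every m-prime $Q$ containing $I$ automatically satisfies $Q \supseteq J$ whenever $\dist{I}{J} < \varepsilon$. Once this holds, the inclusion of indexing sets $\{Q : Q \supseteq I\} \subseteq \{Q : Q \supseteq J\}$ gives $\mdim{R/J} \geq \mdim{R/I}$ by monotonicity of the supremum.

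To choose $\varepsilon$, fix a finite monomial generating sequence $\underline{X}^{\underline\alpha_1},\ldots,\underline{X}^{\underline\alpha_n}$ of $I$ and set $\varepsilon = \min\{\alpha_{i,j} \mid \alpha_{i,j} > 0\}$, which is positive because $I \neq 0$. Let $Q = Q_T$ be an m-prime containing $I$. For each generator $\underline{X}^{\underline\alpha_i} \in Q$, the m-primality characterization in \pref{pure} produces an index $j_i \in T$ with $\alpha_{i,j_i} > 0$, so in particular $X_{j_i}^{\alpha_{i,j_i}} \in Q$. Given any $\underline{X}^{\underline\gamma} \in \monset{J}$, Definition~\ref{def130521a}\eqref{def130521a2} provides $\underline{X}^{\underline\delta} \in \monset{I}$ with $\dist{\underline\delta}{\underline\gamma} < \varepsilon$; by \fref{2.2}, $\underline{X}^{\underline\delta}$ is a multiple of some generator $\underline{X}^{\underline\alpha_i}$, which forces $\delta_{j_i} \geq \alpha_{i,j_i} \geq \varepsilon$. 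Combined with $|\gamma_{j_i} - \delta_{j_i}| \leq \dist{\underline\delta}{\underline\gamma} < \varepsilon$, this yields $\gamma_{j_i} > 0$, so $X_{j_i}^{\gamma_{j_i}} \in Q_T$ and hence $\underline{X}^{\underline\gamma} \in Q$.

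The main obstacle is that the single $\varepsilon$ must serve simultaneously for every m-prime $Q$ containing $I$: the critical index $j_i \in T$ depends both on the generator and on the choice of $Q$, and could in principle be any coordinate where $\underline\alpha_i$ is positive. Taking the minimum over all positive entries of all generators finesses this by providing a lower bound that is valid regardless of which coordinate $Q$ singles out. A secondary concern is extracting the approximating monomial $\underline{X}^{\underline\delta}$ from $\dist{I}{J} < \varepsilon$, but this is immediate from \dref{ef130521a}\eqref{def130521a2}; the reduction to the inequality $\mdim{R/J} \geq \mdim{R/I}$ is then formal from \fref{subdim}.
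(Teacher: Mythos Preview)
Your proposal is correct and follows essentially the same argument as the paper: both choose $\varepsilon$ as the minimum positive exponent among the generators, then show that every m-prime $Q\supseteq I$ contains $J$ by finding, for each monomial of $J$, a nearby monomial of $I$ whose relevant coordinate is at least $\varepsilon$, forcing the corresponding coordinate in $J$ to be positive. The only minor omission is the edge case $I=R$ (where your $\varepsilon$ may be ill-defined if $1$ appears among the generators); the paper disposes of this in its first line by noting $\mdim{R/R}=-\infty$.
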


 \begin{cor}\label{cor130521a}
  The monomial Krull dimension function is lower semicontinuous on the set of finitely generated monomial ideals of $R$.
 \end{cor}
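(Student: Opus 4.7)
The plan is to deduce the corollary as an essentially immediate consequence of Theorem~\ref{tsemi-cont}, after unpacking the definition of lower semicontinuity. Recall that a function $f$ on a metric space $(X,\dist{}{})$ is lower semicontinuous provided that for each $x\in X$ and each $c\in\mathbb R$ with $c<f(x)$, there is an open neighborhood $U$ of $x$ on which $f>c$. Since our dimension function is integer-valued on the relevant space, this is equivalent to demanding that at each $x\in X$ there is an open ball around $x$ on which $f\geq f(x)$.

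First I would let $X$ denote the metric space of non-zero finitely generated monomial ideals of $R$ under the metric $\operatorname{dist}$ from \tref{metric}, and fix an arbitrary $I\in X$. I would then invoke \tref{semi-cont} to obtain $\varepsilon>0$ such that every non-zero monomial ideal $J$ (in particular every $J\in X$) with $\dist{I}{J}<\varepsilon$ satisfies $\mdim{R/J}\geq\mdim{R/I}$. Given any $c<\mdim{R/I}$, the open ball of radius $\varepsilon$ around $I$ in $X$ is thus contained in $\{J\in X\mid\mdim{R/J}>c\}$, which establishes lower semicontinuity at $I$. Since $I$ was arbitrary, this is the desired conclusion.

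There is no hard step here: the real work has already been carried out in \tref{semi-cont}, and the corollary amounts to translating its conclusion into the language of lower semicontinuity. The only point worth double-checking is that we are working in the subspace of \emph{finitely generated} non-zero monomial ideals (the domain on which $\operatorname{dist}$ is a metric by \tref{metric}), whereas \tref{semi-cont} is stated for arbitrary non-zero monomial ideals $J$; this mismatch only helps us, since every point of $X$ is a non-zero monomial ideal to which the theorem applies directly, and the conclusion $\mdim{R/J}\geq\mdim{R/I}$ therefore holds for all $J\in X$ in the $\varepsilon$-ball around $I$.
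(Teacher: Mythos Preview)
Your proposal is correct and matches the paper's approach: the paper gives no separate proof of the corollary, treating it as an immediate consequence of \tref{semi-cont}, and you have accurately spelled out why that implication holds. Your observation about the mismatch between the domain of the metric and the hypothesis of \tref{semi-cont} is a nice clarification that the paper leaves implicit.
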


\begin{ex}\label{ex130508j}
Let $I$ be a non-zero finitely generated monomial ideal of~$R$.

If $\mdim{R/I}=d-1$, then there is a real number $\epsilon>0$ such that
for all monomial ideals $J$ in $R$ such that $\dist IJ<\epsilon$, one has
$\mdim{R/J}=d-1$. Indeed, \tref{semi-cont} provides a real number $\epsilon>0$ such that
for all non-zero monomial ideals $J$ in $R$ such that $\dist IJ<\epsilon$, one has
$\mdim{R/J}\geq d-1$. If $\mdim{R/J}> d-1$, then $\mdim{R/J}=d$, so Example~\ref{ex130509a} implies that
$J=0$, a contradiction.

If  $\mathbb M^i=\mathbb R$ for all $i$, then (regardless of the value of $\mdim{R/I}$) for each real number $\epsilon>0$ there is a non-zero finitely generated
monomial ideal $J\subset I$ in $R$ such that $0<\dist IJ<\epsilon$ and
$\mdim{R/J}=d-1$.
Indeed, consider the ideal $X_1^{\epsilon/2}I$, which is non-zero and finitely generated since $I$ is so.
As in Example~\ref{ex130509c}, it is straightforward to show that $\dist{I}{X_1^{\epsilon/2}I}=\epsilon/2<\epsilon$.
However, the ideal $X_1^{\epsilon/2}I$ is contained in $Q_{\{1\}}=(X_1^a\mid a>0)R$,
so we have $\mdim{X_1^{\epsilon/2}I}\geq d-v(Q_{\{1\}})=d-1$. In particular, if $\mdim{R/I}<d-1$, then strict inequality can occur
in \tref{semi-cont}. This behavior is depicted in the following two diagrams with $d=2$: the first diagram represents  the ideal
$I=J_{(1,1),(0,0)}=(X_1,X_2)R$ and the second one represents 
$X_1^{\epsilon/2}I=(X_1^{1+(\epsilon/2)},X_1^{\epsilon/2}X_2)R=(X_1^{\epsilon/2})R\cap(X_1^{1+(\epsilon/2)},X_2)R$.

\

\begin{center}
\begin{tikzpicture}
	\draw (1.5,-0.2) node[below,scale=.75]{$1$} -- (1.5,0.2);
	\draw (3,-0.2) node[below,scale=.75]{$2$} -- (3,0.2);
	\draw (-0.2,1.5) node[left,scale=.75]{$1$} -- (0.2,1.5);
	\draw (-0.2,3) node[left,scale=.75]{$2$} -- (0.2,3);
	\draw[fill,color=black!50] (0,1.5) -- (1.5,1.5) -- (1.5,0) -- (3.2,0) -- (3.2,3.2) -- (0,3.2) -- cycle;
	\draw[->] (-0.2,0) -- (3.4,0);
	\draw[->] (0,-0.2) -- (0,3.4);
	\draw (0,1.5) -- (1.5,1.5);
	\draw (1.5,0) -- (1.5,1.5);
\end{tikzpicture}
\qquad
\qquad
\qquad
\begin{tikzpicture}
	\draw (1.5,-0.2) node[below,scale=.75]{$1$} -- (1.5,0.2);
	\draw (3,-0.2) node[below,scale=.75]{$2$} -- (3,0.2);
	\draw (0.3,-0.2) node[below,scale=.75]{$\frac\epsilon 2$} -- (0.3,0.2);
	\draw (1.8,-0.2) node[below,scale=.75]{\,\,\,\,\,\,\,\,\,$1+\frac\epsilon 2$} -- (1.8,0.2);
	\draw (-0.2,1.5) node[left,scale=.75]{$1$} -- (0.2,1.5);
	\draw (-0.2,3) node[left,scale=.75]{$2$} -- (0.2,3);
	\draw[fill,color=black!50] (0.3,1.5) -- (1.8,1.5) -- (1.8,0) -- (3.2,0) -- (3.2,3.2) -- (0.3,3.2) -- cycle;
	\draw[->] (-0.2,0) -- (3.4,0);
	\draw[->] (0,-0.2) -- (0,3.4);
	\draw (0.3,1.5) -- (0.3,3.2);
	\draw (0.3,1.5) -- (1.8,1.5);
	\draw (1.8,0) -- (1.8,1.5);
\end{tikzpicture}
\end{center}
\end{ex}

\section*{Acknowledgments}
We are grateful Jon Totushek for teaching us how to create the diagrams for  our examples.

\bibliographystyle{plain}

\end{document}